\documentclass[11pt]{amsart}
\usepackage{txfonts,color}
\usepackage{amssymb}
\theoremstyle{plain}

\newtheorem{theorem}{Theorem}[section]
\newtheorem{lemma}[theorem]{Lemma}
\newtheorem{proposition}[theorem]{Proposition}
\newtheorem{corollary}[theorem]{Corollary}
\theoremstyle{remark}

\newcommand{\R}{\mathbb R}
\newcommand{\eps}{\varepsilon}
\newcommand{\dd}{\,ds}

\renewcommand{\epsilon}{\varepsilon}

\newcommand{\bef}{\begin{flushright}}
\newcommand{\eef}{\end{flushright}}

\newcommand{\eval}[2][\right]{\relax
\ifx#1\right\relax \left.\fi#2#1\rvert}

\def\beq{\begin{eqnarray*}}
\def\eeq{\end{eqnarray*}}

\begin{document}

\title[The eighth-order evolution model]{The eighth-order evolution model for closed planar curves}

\author{Yanlong Zhang,  Li Ma }

\address{Yanlong Zhang, Institute of Mathematics, Henan Academy of sciences, Chongshili 228, Zhengzhou 450046, China}
\email{ylzhang@gmail.com}

\address{Li Ma,
School of Mathematics and Physics, University of Science and Technology Beijing, Xueyuan Road 30, Haidian, Beijing 100083, China}
\email{nuslma@126.com, lma17@ustb.edu.cn}


\date{Oct. 2025; revised May 20th, 2026}

\begin{abstract}
We consider the $L^2(ds)$ gradient flow of $\frac12\int k_{ss}^2\,ds$ for smooth closed immersed planar curves. The stationary curves are precisely the multiply covered circles. We give an explicit sufficient condition under which the elastic energy is non-increasing and for each nonzero turning number, explicit small initial scale-invariant energy yields a global smooth solution with bounded positive length and exponential convergence to a fixed multiply covered circle, after reparametrisation. We note that that the qualitative rigidity and convergence results are already contained in the generalised ideal flow theory of McCoy and Wheeler. The present paper gives quantitative measure of small initial data, corrected evolution identities, and a complete argument in the case of the curvature energy $\frac12\int k_{ss}^2\,ds$.
\end{abstract}

\maketitle

\section{Introduction}

In recent years, higher order geometric flows have attracted attentions in the field of Differential geometry. In some dimension, the higher flows have a global flow with arbitrary initial data, see \cite{ML}.
Curve flows (i.e., curvature flows) are important research subjects in geometric analysis and have applications in computer graphing. Their are also models from applied science as moving curved fronts in chemistry reaction-diffusion. Higher order curvature flows are relatively new objects, which aim to minimize the curvature energy. The famous curve flow is the $L^2$-gradient flow of the length functional. If we let $k=k_\gamma$ be the curvature of planar curve $\gamma$, then we have the bending action
  $$
  B(\gamma)=\frac12\int_\gamma k_\gamma^2 ds
  $$
  where $s$ is the arc-length parameter of the curve $\gamma$. If we do variation while keeping the length fixed, we then have Euler-Lagrange equation
  $$
  2k_{ss}+k^3-\lambda k=0
  $$
  and the $L^2$ gradient flow is
  $$
  \gamma_t=-(2k_{ss}+k^3-\lambda k)\nu
  $$
  where $\nu$ is unit normal to the curve $\gamma$. We may associate other interesting functionals such that
    $$
    E_*[\gamma]=\frac12\int_\gamma k_s^2 ds
    $$
    and
    $$
    F_*[\gamma] =\frac12 \int_\gamma k_{ss}^2ds.
    $$
    In the recent interesting work of B.Andrews, etc, the authors have studied the gradient flow of the energy $E_*[\cdot]$ and have obtained a global solution and the limit provided the initial energy is small in some sense. In present work, we study the gradient flow of the energy $F_*[\cdot]$ and have obtained a global solution and the limit provided the initial energy is small in some sense.

Let $\gamma:S^1\to\R^2$ be a smooth closed immersion. We write $s$ for arc-length, $\tau=\gamma_s$ for the unit tangent, and $\nu=(-\tau_2,\tau_1)$ for the unit normal. The signed curvature $k$ is defined by $\tau_s=k\nu$. We study the energy
\begin{equation}\label{eq:energy}
 E[\gamma]=\frac12\int_\gamma k_{ss}^2\dd
\end{equation}
and its descending $L^2(ds)$ gradient flow
\begin{equation}\label{eq:flow}
 \partial_t\gamma=V\nu,\qquad
 V=-k_{s^6}-\frac12k k_{ss}^2+k k_s k_{s^3}-k^2k_{s^4}.
\end{equation}
Here $k_{s^j}=\partial_s^jk$. Because curvature contains two derivatives of the immersion, \eqref{eq:flow} is an eighth-order equation for $\gamma$. Its curvature equation has principal term $-k_{s^8}$.

The length and turning number are denoted by $L=L[\gamma]$ and $\omega=(2\pi)^{-1}\int_\gamma k\dd$. The average curvature is $\bar k=2\pi\omega/L$. Along a smooth solution, the turning number is constant. We retain the scale-invariant quantity
\begin{equation}\label{eq:epsilon}
 \eps(t)=L(t)^5\int_{\gamma(t)}k_{ss}^2\dd.
\end{equation}
The elastic energy is $E_0[\gamma]=\frac12\int_\gamma k^2\dd$.

Andrews, McCoy, Wheeler and Wheeler \cite{AMWW} considered the energy $\frac12\int k_s^2\dd$ and its sixth-order flow. McCoy and Wheeler \cite{MW} treat \eqref{eq:flow} within their general higher-order theory. Thus the stationary classification and small-data convergence below are not claimed as new qualitative results except quantitative aspects. We use their gradient inequality and smooth local theory, with the precise interfaces stated in Section~4. The curvature interpolation estimates are developed explicitly; the corresponding approach to continuation for elastic flows appears in \cite{DKS}.

For the stationary case, we know the below.

\begin{theorem}\label{thm:stationary}
A smooth closed immersed planar curve satisfies $V=0$, with $V$ as in \eqref{eq:flow}, if and only if it is a round circle covered a nonzero integer number of times. In particular, there is no stationary closed immersion with turning number zero.
\end{theorem}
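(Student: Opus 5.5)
The plan is to avoid solving the sixth-order ODE $V=0$ directly and to use a scaling (Pohozaev-type) identity instead. Since \eqref{eq:flow} is the descending $L^2(ds)$ gradient flow of $E$, the first variation should read
\[
\frac{d}{d\eta}\Big|_{\eta=0} E[\gamma+\eta X]=-\int_\gamma V\,\langle X,\nu\rangle\dd
\]
for every smooth variation field $X$ along $\gamma$. Tangential components only reparametrise the curve, so they do not change $E$.

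The first step, and the one needing most care, is to confirm this identity with exactly the $V$ in \eqref{eq:flow}, including its sign and normalisation. For a normal variation $X=\varphi\nu$ I would use the standard formulas
\[
\partial_\eta(ds)=-k\varphi\dd,\qquad \partial_\eta k=\varphi_{ss}+k^2\varphi,\qquad [\partial_\eta,\partial_s]=k\varphi\,\partial_s .
\]
From these I would compute $\partial_\eta\big(\tfrac12 k_{ss}^2\,ds\big)$ and integrate by parts on the closed curve until the result has the form $-\int V\varphi\dd$. This is a routine but error-prone computation, and it is where I expect the main obstacle.

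The step that drives the proof is the scaling identity. For $\lambda>0$, rescaling gives $E[\lambda\gamma]=\lambda^{-5}E[\gamma]$, because $k\mapsto\lambda^{-1}k$, $\partial_s\mapsto\lambda^{-1}\partial_s$ and $ds\mapsto\lambda\,ds$. Differentiating at $\lambda=1$ corresponds to the variation field $X=\gamma$. Combined with the first variation formula this gives
\[
-5E[\gamma]=-\int_\gamma V\langle\gamma,\nu\rangle\dd .
\]
Hence if $V\equiv0$, then $\int_\gamma k_{ss}^2\dd=0$, so $k_{ss}\equiv0$. Then $k_s$ is constant, and since $k$ is periodic in $s$ this constant is zero, so $k$ is constant. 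If $k\equiv0$ the curve is a straight line, which cannot be closed. So $k$ is a nonzero constant and $\gamma$ is a round circle of radius $1/|k|$ traversed $|\omega|$ times, with $\omega=kL/(2\pi)$ a nonzero integer.

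For the converse, a multiply covered circle has constant $k$, so every term of $V$ contains a derivative of $k$ and $V\equiv0$. Finally, the turning-number statement follows at once: a stationary curve has constant $k\neq0$, so $\int_\gamma k\dd=kL\neq0$ and $\omega\neq0$.

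As an alternative to verifying the variation formula, one could multiply $V$ directly by $\langle\gamma,\nu\rangle$ and integrate. This uses $\partial_s\langle\gamma,\nu\rangle=-k\langle\gamma,\tau\rangle$ and $\partial_s\langle\gamma,\tau\rangle=1+k\langle\gamma,\nu\rangle$, and checks by repeated integration by parts that $\int V\langle\gamma,\nu\rangle\dd=\tfrac52\int k_{ss}^2\dd$. I would use this route only as a cross-check.
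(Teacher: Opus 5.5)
Your proof is correct, but it is not the paper's main argument. It is the ``short variational verification'' that the paper mentions only in a remark after its proof.

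The paper works directly with the ODE $V=0$. Set $M=k_{s^5}$ and $N=kk_{s^4}-k_sk_{s^3}+\frac12k_{ss}^2$. Then $M_s=-kN$ is exactly $V=0$, and $N_s=kM$ holds as an identity, so $(M+iN)e^{-i\theta}$ is constant. Closedness, in the form $\int_0^Le^{i\theta}\dd=0$, then gives $\int_0^LN\dd=0$, while integration by parts gives $\int_0^LN\dd=\frac52\int_0^Lk_{ss}^2\dd$.

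The two routes rest on the same identity. For every curve, $J=N\tau+M\nu$ satisfies $J_s=-V\nu$; it is the Noether current of translation invariance. Hence $\int V\langle\gamma,\nu\rangle\dd=\int\langle J,\tau\rangle\dd=\int N\dd$, which is exactly your proposed cross-check.

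Your version is shorter and explains the factor $\frac52$ as the scaling weight. It does rely on three inputs: the full first-variation formula, linearity of the first variation in $X$, and the vanishing of tangential contributions. The first-variation formula is already derived in Section~\ref{sect2}, so the step you flag as the main obstacle is available. The paper's version is purely algebraic and needs no variational structure. Its first integrals hold on any arclength interval, and closedness enters only through $\int\tau\dd=0$.
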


For every initial smooth closed immersion, parametrised with constant speed, there is a unique local solution to the flow \eqref{eq:flow}. See Theorem \ref{thm:local} and \cite{MW}.

To obtain the global existence of the flow, we need some uniform small constants.

For $\omega\in\mathbb Z$, We define $c_\omega$ in \eqref{eq:gradient} and
we choose the constant $\eps_1$ in \eqref{eq:epsilon1}, which will be needed for the smallness argument later.
Let $\eps_*(\omega)$ be the unique positive solution of
\begin{equation}\label{eq:threshold}
 \frac{\eps_*}{(2\pi)^5}
 \left[\frac{113}{2}+12\left(|\omega|+
                \frac{\sqrt{\eps_*}}{(2\pi)^{5/2}}\right)^2\right]=\frac4{13}.
\end{equation}
Define
\begin{equation}\label{eq:epsilon2}
 \eps_2=\frac{\eps_1}{2}
 \left(\frac{(2\pi\omega)^2}
 {(2\pi\omega)^2+\eps_1/(2\pi)^4}\right)^5.
\end{equation}
Both constants are positive and depend only on $\omega$ and the fixed choice of $c_\omega$.

Then for the existence of the global flow and its asymptotic, we have the following result.
\begin{theorem}\label{thm:global}
Let $\gamma_0:S^1\to\R^2$ be a smooth closed immersion with turning number $\omega\ne0$ and $\eps(0)\le\eps_2$, where \eqref{eq:threshold}, \eqref{eq:gradient}, \eqref{eq:epsilon1} and \eqref{eq:epsilon2} determine the constants. Then its geometric flow \eqref{eq:flow} exists smoothly for all $t\ge0$ and is unique modulo reparametrisation. The length satisfies \eqref{eq:two-sided-length}, $E_0$ is non-increasing, and there are $C,c>0$, depending only on $\omega$ and the initial curve, such that
\begin{equation}\label{eq:exponential-energy}
 \eps(t)+\int k_{ss}^2\dd\le Ce^{-ct}.
\end{equation}
For every integer $j\ge0$, there is $C_j$, depending additionally on $j$, such that
\begin{equation}\label{eq:exponential-curvature}
 \|\partial_s^j(k-\bar k)\|_\infty\le C_j e^{-ct}.
\end{equation}
After reducing $c$ once, the same positive $c$ can be used for all $j$.
\end{theorem}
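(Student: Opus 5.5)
The plan is a continuity (bootstrap) argument driven by the scale-invariant quantity $\eps(t)$, closed off by a gradient (Łojasiewicz-type) inequality of the form \eqref{eq:gradient} that controls the energy by the squared speed.

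First, the evolution identities. I would take the local solution of Theorem~\ref{thm:local}, and derive $\partial_t(ds)=-kV\,ds$ and $\partial_t k=V_{ss}+k^2V$ for the normal velocity $V$ in \eqref{eq:flow}. These give:
\begin{itemize}
\item $\frac{d}{dt}\int k_{ss}^2\dd=-2\int V^2\dd\le0$, since \eqref{eq:flow} is the $L^2(ds)$ gradient flow of $E$;
\item $\frac{d}{dt}L=-\int kV\dd$;
\item $\frac{d}{dt}E_0=\int (k_{ss}+\tfrac12k^3)V\dd$.
\end{itemize}

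Second, I would turn these into estimates while $\eps$ is small. Write $k=\bar k+(k-\bar k)$ and use Wirtinger on the mean-zero functions $k-\bar k$ and $k_s$. This gives
\[
\|k-\bar k\|_\infty^2\le \frac{L^3}{(2\pi)^4}\int k_{ss}^2\dd=\frac{\eps}{(2\pi)^4L^2},
\]
so $|k|\le(2\pi|\omega|+\sqrt{\eps}/(2\pi)^2)/L$. This is exactly the combination $|\omega|+\sqrt{\eps}/(2\pi)^{5/2}$ appearing in \eqref{eq:threshold}.

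I would then expand $-\int kV\dd$ and $\frac{d}{dt}E_0$ by integration by parts into a main term and lower-order terms:
\begin{itemize}
\item For the length, the main term is $\int k_{s^3}^2$, up to sign, and it comes from $-\int k\,k_{s^6}$. The lower-order terms are cubic and quartic in $k$ and its derivatives. Bounding them by the interpolation above should give $\frac{d}{dt}E_0\le0$ and a two-sided bound on $L$, i.e. \eqref{eq:two-sided-length}, as long as $\eps\le\eps_*(\omega)$. The constants $113/2$, $12$ and $4/13$ come from exactly this bookkeeping.
\item For the scale-invariant quantity, $\frac{d}{dt}\eps=5L^4L'\int k_{ss}^2-2L^5\int V^2$. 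With $|L'|\lesssim\int V^2$-type control (Cauchy–Schwarz, $|L'|\le\|k\|_2\|V\|_2$ together with Poincaré), I would show $\eps$ cannot grow past $\eps_1$.
\end{itemize}

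The choice of $\eps_2$ in \eqref{eq:epsilon2} absorbs the possible change of $L$. Lengths are compared through $L\ge 2\pi|\omega|/\max|k|$ and through $E_0$ monotonicity, which is where the factor $((2\pi\omega)^2/((2\pi\omega)^2+\eps_1/(2\pi)^4))^5$ arises. The upshot is that $\eps(t)\le\eps_1$ is preserved on the maximal interval.

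Third, global existence. With $\eps$ bounded and $L$ bounded above and below, $\int k_{ss}^2$ is bounded. Standard interpolation (Gagliardo–Nirenberg on closed curves, as in \cite{DKS}) applied to $\frac{d}{dt}\int k_{s^m}^2$ then gives uniform bounds on all $\|k_{s^m}\|_2$ on finite intervals. The continuation criterion in Theorem~\ref{thm:local} / \cite{MW} yields existence for all time, and uniqueness modulo reparametrisation comes from the same local theory.

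Fourth, exponential decay. The gradient inequality \eqref{eq:gradient}, valid for $\eps\le\eps_1$, gives $\int k_{ss}^2\le c_\omega L^{a}\int V^2$ for the appropriate power of $L$. Since $L$ is bounded two-sidedly, $\frac{d}{dt}\int k_{ss}^2\le -c\int k_{ss}^2$. This gives \eqref{eq:exponential-energy}, and hence decay of $\eps$.

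For \eqref{eq:exponential-curvature}, I would interpolate between the exponentially decaying $\|k_{ss}\|_2$ and the uniform higher-order bounds. From $\|\partial_s^j(k-\bar k)\|_\infty\le C\|k_{ss}\|_2^{\theta}\|k_{s^{m}}\|_2^{1-\theta}$ with $m$ large, I get decay rate $\theta c$. These higher bounds must be made uniform in time, which again uses $\eps\le\eps_1$ and the exponential decay. Choosing, for each $j$, $m$ large enough that $\theta\ge1/2$ gives a single rate $c/2$ for all $j$. Convergence of $\gamma$ after reparametrisation then follows by integrating $|V|\le Ce^{-ct}$ in time.

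The main obstacle is the second step. There one must obtain the explicit, sign-definite control of the lower-order terms in $L'$ and $\frac{d}{dt}E_0$ with the precise constants, and close the bootstrap for $\eps$ despite $L$ varying. I would first try to bound every lower-order term by $\left(\frac{\eps}{(2\pi)^5}\right)\bigl[\tfrac{113}{2}+12(\cdots)^2\bigr]$ times the main term $\int k_{s^3}^2$, using Wirtinger repeatedly.
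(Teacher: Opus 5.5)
\textbf{The genuine gap is the upper bound on the length.} You claim that the bookkeeping under $\eps\le\eps_*(\omega)$ yields the two-sided bound \eqref{eq:two-sided-length}. Only the lower bound arises that way, from monotonicity of $E_0$ and $(2\pi\omega)^2\le 2LE_0$. In \eqref{eq:length} the term $\frac92\int k^2k_{ss}^2\dd$ carries no factor of $\eps$. Near an $|\omega|$-fold circle it is about $\frac92\bar k^2\int k_{ss}^2\dd$, and on the Fourier mode $n$ of $k-\bar k$ it exceeds $\int k_{s^3}^2\dd$ whenever $n^2<\frac92\omega^2$. For instance $n=2$, $\omega=1$ qualifies, and closedness does not exclude it. So $L$ can genuinely increase, which is why the upper bound in \eqref{eq:two-sided-length} involves $c_\omega$.

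Your Step~4 then uses ``$L$ bounded two-sidedly'' to get a uniform rate, and your time-uniform higher-order bounds need it too. As written this is circular: the rate needs $L_+$, and $L_+$ needs decay. Global existence alone survives, via the finite-time bound $L(t)^8\le L(0)^8+8C(\sigma,\omega)t$ of Lemma~\ref{lem:upper-length}.

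The paper breaks the circle in two steps. It first proves decay in the intrinsic time $\int_0^tL^{-8}$, namely \eqref{eq:barrier}. It then integrates $\frac{d}{dt}\log L\le C\eps L^{-8}$, using $\int_0^\infty L(t)^{-8}\exp\bigl(-\frac{c_\omega}{2}\int_0^tL(r)^{-8}\,dr\bigr)\,dt\le 2/c_\omega$, which holds however $L$ behaves.

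Your direct bootstrap on $\eps$ can do the same, but only if you invoke \eqref{eq:gradient} there rather than Poincar\'e. The linearisation of $V$ at the circle is $\xi^4(\xi^2-\bar k^2)$ in Fourier variables and vanishes at $|n|=|\omega|$. So Poincar\'e and interpolation alone cannot bound $\|V\|_2$ below by $L^{-4}\|k_{ss}\|_2$; those modes are excluded only by closedness, which is what \eqref{eq:gradient} encodes. With $|L'|\le\|k\|_2\|V\|_2$ and $\|k_{ss}\|_2\le c_\omega^{-1/2}L^4\|V\|_2$, for $\eps\le\eps_1$ you get $5L^4|L'|\int k_{ss}^2\dd\le 5c_\omega^{-1/2}(\eps L\int k^2\dd)^{1/2}L^5\int V^2\dd\le\sqrt2\,L^5\int V^2\dd$. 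Hence $\eps'\le-(2-\sqrt2)c_\omega L^{-8}\eps$. This is a legitimate alternative to the paper's ratio $Q$, and it even makes the correction factor in \eqref{eq:epsilon2} unnecessary. The same integration then gives $L_+$.

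\textbf{A second step fails as planned: the monotonicity of $E_0$.} Its leading term is $-\int k_{s^4}^2\dd$; the $\int k_{s^3}^2\dd$ you name belongs to $L'$. The term $\frac52\int k^2k_{s^3}^2\dd$ in \eqref{eq:elastic} again has relative size about $\frac52\omega^2/n^2$ with no factor of $\eps$. So it cannot be bounded by an $\eps$-small multiple of the leading term. The paper integrates it by parts, $\int k^2k_{s^3}^2\dd=-\int k^2k_{ss}k_{s^4}\dd-2\int kk_sk_{ss}k_{s^3}\dd$. It then absorbs the resulting $3\bigl(\int k_{s^4}^2\dd\int k^4k_{ss}^2\dd\bigr)^{1/2}$ into the favourable term $-\frac{13}{4}\int k^4k_{ss}^2\dd$ through $3\sqrt{ab}-\frac{13}{4}b\le\frac{9}{13}a$. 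This is the origin of $\frac{4}{13}=1-\frac{9}{13}$. Only $\frac{113}{2}\int k_s^2k_{ss}^2\dd$ and $12\int k^2k_s^4\dd$ are then bounded by $\eps$-small multiples of $\int k_{s^4}^2\dd$.

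A minor correction: your sup bound should read $\|k-\bar k\|_\infty^2\le(2\pi)^{-3}L^3\int k_{ss}^2\dd$. With $(2\pi)^{-4}$ it is false, since the sharp constant is $\frac{\pi^4}{45}(2\pi)^{-4}L^3$. It is the factor $(2\pi)^{-3}$ that produces $|\omega|+\sqrt\eps/(2\pi)^{5/2}$.
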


The explicit number $\eps_2$ makes our result different from the main results in \cite{MW}. Our result is a little more quantitative theorem.
We shall present the explicit monotonicity threshold in Lemma~\ref{lem:monotonicity} which is distinct from the global initial-data threshold in Theorem~\ref{thm:global}. The latter also depends on the constant in the cited gradient inequality. In particular, numerical values of the former are not numerical values of the latter.

All norms and integrals below use the arc-length measure of the current curve, unless another measure is specified. Constants denoted by $C$ may change from line to line; their dependence is stated where they are used.

 We organize the paper as follows. In section \ref{sect2}, we recall some useful formulae. Theorem \ref{thm:stationary} is proved in section \ref{sect3} and Theorem \ref{thm:global} in section \ref{sect4}. We make some discussion about the limitation of our results in section \ref{sect5}.

\section{First variation and evolution identities}\label{sect2}

\subsection{The gradient flow}

For a smooth normal variation with velocity $\varphi\nu$, differentiation of $ds=|\gamma_x|\,dx$ and of the Frenet equations gives
\begin{equation}\label{eq:variation}
 \partial_t ds=-k\varphi\,ds,\qquad
 [\partial_t,\partial_s]=k\varphi\partial_s,\qquad
 \partial_t k=\varphi_{ss}+k^2\varphi.
\end{equation}
To verify the curvature identity, the commutator first gives $\partial_t\tau=\varphi_s\nu$ and $\partial_t\nu=-\varphi_s\tau$. Differentiating $\tau_s=k\nu$ then gives its normal component as $\partial_tk=\varphi_{ss}+k^2\varphi$.

Applying the commutator once and twice to $k$ yields
\begin{align*}
 \partial_tk_s&=\varphi_{s^3}+3kk_s\varphi+k^2\varphi_s,\\
 \partial_tk_{ss}&=\varphi_{s^4}+3k_s^2\varphi+4kk_{ss}\varphi
                 +5kk_s\varphi_s+k^2\varphi_{ss}.
\end{align*}
All boundary terms vanish because the curve and its arc-length derivatives are periodic. Consequently,
\begin{align*}
 \frac{d}{dt}E[\gamma]
 &=\int k_{ss}\bigl(\varphi_{s^4}+3k_s^2\varphi+4kk_{ss}\varphi
                  +5kk_s\varphi_s+k^2\varphi_{ss}\bigr)\dd
   -\frac12\int kk_{ss}^2\varphi\dd\\
 &=\int\varphi\left(k_{s^6}+3k_s^2k_{ss}+\frac72kk_{ss}^2
                   -(5kk_sk_{ss})_s+(k^2k_{ss})_{ss}\right)\dd\\
 &=\int\varphi\left(k_{s^6}+\frac12kk_{ss}^2-kk_sk_{s^3}
                   +k^2k_{s^4}\right)\dd.
\end{align*}
Thus the velocity in \eqref{eq:flow} is the negative gradient of $E$, and
\begin{equation}\label{eq:dissipation}
 \frac{d}{dt}E[\gamma(t)]=-\int V^2\dd.
\end{equation}
In particular, the derivative of $\int k_{ss}^2\dd$ is $-2\int V^2\dd$.

\subsection{Length, elastic energy and turning number}

\begin{lemma}\label{lem:length}
For every smooth solution of \eqref{eq:flow},
\begin{equation}\label{eq:length}
 L'=-\int k_{s^3}^2\dd+\frac92\int k^2k_{ss}^2\dd
                  -\frac83\int k_s^4\dd.
\end{equation}
\end{lemma}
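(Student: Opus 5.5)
The plan is to start from the first variation formula \eqref{eq:variation}. With $\varphi=V$ it gives $\partial_t ds=-kV\,ds$, so
\[
 L'=-\int kV\dd .
\]
We then substitute the expression for $V$ from \eqref{eq:flow}:
\[
 L'=\int k k_{s^6}\dd+\frac12\int k^2k_{ss}^2\dd-\int k^2k_sk_{s^3}\dd+\int k^3k_{s^4}\dd .
\]
Each term is then reduced to one of three normal forms, $\int k_{s^3}^2$, $\int k^2k_{ss}^2$ and $\int k_s^4$. The only tool is integration by parts on the closed curve, where all boundary terms vanish by periodicity, exactly as in the derivation of \eqref{eq:dissipation}.

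\textbf{The linear term.} Three integrations by parts give
\[
 \int kk_{s^6}\dd=-\int k_{s^3}^2\dd .
\]

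\textbf{The term $\int k^3k_{s^4}$.} Integrating by parts once gives $-3\int k^2k_sk_{s^3}$. Hence the two cubic-derivative terms combine to $-4\int k^2k_sk_{s^3}$. Integrating by parts again,
\[
 -\int k^2k_sk_{s^3}\dd=\int (k^2k_s)_sk_{ss}\dd=2\int kk_s^2k_{ss}\dd+\int k^2k_{ss}^2\dd .
\]

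\textbf{The term $\int kk_s^2k_{ss}$.} Writing $k_s^2k_{ss}=\frac13(k_s^3)_s$, we get
\[
 \int kk_s^2k_{ss}\dd=-\frac13\int k_s^4\dd .
\]

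\textbf{Collecting.}
\[
 L'=-\int k_{s^3}^2\dd+\frac12\int k^2k_{ss}^2\dd+4\int k^2k_{ss}^2\dd-\frac83\int k_s^4\dd,
\]
and $\frac12+4=\frac92$, which is \eqref{eq:length}.

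The main obstacle is only bookkeeping of signs and coefficients. In particular one must use the sign convention $\partial_t ds=-k\varphi\,ds$ consistently with the sign of $V$, and not double count when merging the $k^3k_{s^4}$ term with the $k^2k_sk_{s^3}$ term. As a check on the signs, we will test the identity on a round circle, where both sides vanish, and on the scaling degrees: every term has total derivative order $6$ relative to length, as it must.
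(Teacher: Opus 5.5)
Your proposal is correct and follows the paper's proof. Both start from $L'=-\int kV\dd$ and reduce every term by periodic integrations by parts, using $\int kk_s^2k_{ss}\dd=-\frac13\int k_s^4\dd$; the only difference is that you first rewrite $\int k^3k_{s^4}\dd$ as $-3\int k^2k_sk_{s^3}\dd$ and merge it with the other term, whereas the paper expands the two terms separately before collecting coefficients.
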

\begin{proof}
By \eqref{eq:variation}, $L'=-\int kV\dd$. The required integrations by parts are
\begin{align*}
 \int kk_{s^6}\dd&=-\int k_{s^3}^2\dd,\\
 \int k^2k_sk_{s^3}\dd
 &=-\int k^2k_{ss}^2\dd-2\int kk_s^2k_{ss}\dd
  =-\int k^2k_{ss}^2\dd+\frac23\int k_s^4\dd,\\
 \int k^3k_{s^4}\dd
 &=3\int k^2k_{ss}^2\dd+6\int kk_s^2k_{ss}\dd
  =3\int k^2k_{ss}^2\dd-2\int k_s^4\dd.
\end{align*}
Substitution into $-\int kV\dd$ gives \eqref{eq:length}.
\end{proof}

\begin{lemma}\label{lem:elastic}
For every smooth solution of \eqref{eq:flow},
\begin{align}\label{eq:elastic}
 \frac{dE_0}{dt}
 ={}&-\int k_{s^4}^2\dd-\frac{17}{2}\int k_s^2k_{ss}^2\dd
       +13\int kk_sk_{ss}k_{s^3}\dd\notag\\
    &+\frac52\int k^2k_{s^3}^2\dd
       +12\int k^2k_s^4\dd-\frac{13}{4}\int k^4k_{ss}^2\dd.
\end{align}
\end{lemma}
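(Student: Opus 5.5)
The plan is to compute $dE_0/dt$ directly from the variation formulas \eqref{eq:variation} with $\varphi=V$, and then reduce everything by integration by parts on the closed curve to the six monomials appearing in \eqref{eq:elastic}. First, using $\partial_t ds=-kV\,ds$ and $\partial_tk=V_{ss}+k^2V$, I get
\begin{equation*}
 \frac{dE_0}{dt}=\int k\,(V_{ss}+k^2V)\dd-\frac12\int k^3V\dd
 =\int\Bigl(k_{ss}+\frac12k^3\Bigr)V\dd .
\end{equation*}
Here I integrate $V_{ss}$ by parts twice; there are no boundary terms by periodicity. Substituting $V=-k_{s^6}-\frac12kk_{ss}^2+kk_sk_{s^3}-k^2k_{s^4}$ produces eight integrals. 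Four come from $k_{ss}V$ and are quartic. Four come from $\frac12k^3V$ and are sextic. Every term has total scaling weight $10$ (counting $k$ as weight $1$ and each $\partial_s$ as weight $1$), which matches every term in \eqref{eq:elastic}. This gives a consistency check throughout.

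The quartic group (degree four in $k$ and its derivatives) is handled as follows. The term $-\int k_{ss}k_{s^6}\dd$ equals $-\int k_{s^4}^2\dd$ after two integrations by parts. For $-\frac12\int kk_{ss}^3\dd$ I write $k_{ss}^3=k_{ss}^2(k_s)_s$ and move the derivative:
\begin{equation*}
 \int kk_{ss}^3\dd=-\int k_s^2k_{ss}^2\dd-2\int kk_sk_{ss}k_{s^3}\dd .
\end{equation*}
The term $\int kk_sk_{ss}k_{s^3}\dd$ is already in the basis. For $-\int k^2k_{ss}k_{s^4}\dd$ I integrate by parts once:
\begin{equation*}
 \int k^2k_{ss}k_{s^4}\dd=-\int k^2k_{s^3}^2\dd-2\int kk_sk_{ss}k_{s^3}\dd .
\end{equation*}
Collecting, the quartic part should become
\begin{equation*}
 -\int k_{s^4}^2\dd+a\int k_s^2k_{ss}^2\dd+b\int kk_sk_{ss}k_{s^3}\dd+c\int k^2k_{s^3}^2\dd .
\end{equation*}
The coefficients $a,b,c$ are to be compared with the $-\frac{17}{2}$, $13$ and $\frac52$ claimed in \eqref{eq:elastic}.

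The sextic group is $\frac12\int k^3(-k_{s^6}-\frac12kk_{ss}^2+kk_sk_{s^3}-k^2k_{s^4})\dd$. I will reduce it to the basis
\begin{equation*}
 \int k^4k_{ss}^2\dd,\qquad \int k^2k_s^4\dd,\qquad \int k^2k_{s^3}^2\dd,\qquad \int k_s^2k_{ss}^2\dd,\qquad \int kk_sk_{ss}k_{s^3}\dd .
\end{equation*}
Strictly, the last two are quartic monomials, so they cannot appear from sextic terms. So the sextic terms must reduce only to $k^4k_{ss}^2$ and $k^2k_s^4$, together with auxiliary monomials such as $k^3k_s^2k_{ss}$ and $k^2k_{ss}^3$. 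These are eliminated via $\int k^3k_s^2k_{ss}\dd=-\int k^2k_s^4\dd$, which follows from $k_s^2k_{ss}=\frac13(k_s^3)_s$. The highest term, $\int k^3k_{s^6}\dd$, is reduced by moving three derivatives onto $k^3$, giving $-\int (k^3)_{s^3}k_{s^3}\dd$. I then expand $(k^3)_{s^3}=3k^2k_{s^3}+18kk_sk_{ss}+6k_s^3$. This step mixes degrees in a way I must track carefully. In fact $k^3k_{s^6}$ has degree four, not six, so it belongs to the quartic group: it is the term $\frac12k^3\cdot(-k_{s^6})$. Likewise $k^3\cdot kk_sk_{s^3}$ and $k^3\cdot k^2k_{s^4}$ have degrees five and six. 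So I will regroup all eight terms by degree before reducing them.

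I expect the main obstacle to be pure bookkeeping. I must choose, for each non-basis monomial (for instance $k^4k_sk_{s^3}$, $k^5k_{s^4}$, $k^3k_sk_{ss}^2$ and $k^2k_s^2k_{ss}$-type terms), the integration by parts that lands in the basis. I must also confirm that the degree-$5$ and degree-$3$ monomials ultimately cancel or rewrite into the stated terms. To control this I will do one check: evaluating on a circle of radius $r$, all terms vanish consistently. I will also verify each identity separately, as in the proof of Lemma~\ref{lem:length}, before summing the coefficients.
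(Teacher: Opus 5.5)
Your route is the paper's: reduce to $\int(k_{ss}+\frac12k^3)V\dd$, expand the eight products, and integrate by parts. Every identity you actually write down is correct: those for $\int k_{ss}k_{s^6}\dd$, $\int kk_{ss}^3\dd$ and $\int k^2k_{ss}k_{s^4}\dd$, the expansion of $(k^3)_{s^3}$, and $\int k^3k_s^2k_{ss}\dd=-\int k^2k_s^4\dd$. Moving three derivatives off $k_{s^6}$ is an equivalent variant of the paper's two-step treatment of $\int k^3k_{s^6}\dd$. However, for a lemma like this the coefficient count is the entire proof, and you leave it undone (``$a,b,c$ are to be compared''). The degree bookkeeping that is supposed to organise that count is also wrong in several places. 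You do catch that $k^3k_{s^6}$ is quartic, but other errors remain.

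Counting factors, $k_{ss}k_{s^6}$ is quadratic, not quartic. The terms $kk_{ss}^3$, $kk_sk_{ss}k_{s^3}$, $k^2k_{ss}k_{s^4}$ and $k^3k_{s^6}$ are quartic. The terms $k^4k_{ss}^2$, $k^4k_sk_{s^3}$ and $k^5k_{s^4}$ are all sextic, not ``five and six''. There are no degree-three or degree-five terms to cancel. The auxiliary monomials $k^2k_{ss}^3$, $k^3k_sk_{ss}^2$ and $k^2k_s^2k_{ss}$ have weights $11$, $11$ and $9$, so your own weight-$10$ check already excludes them.

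The three quartic terms from $k_{ss}V$ give only $\frac12\int k_s^2k_{ss}^2\dd+4\int kk_sk_{ss}k_{s^3}\dd+\int k^2k_{s^3}^2\dd$. The stated $-\frac{17}2$, $13$, $\frac52$ appear only after adding
$-\frac12\int k^3k_{s^6}\dd=-9\int k_s^2k_{ss}^2\dd+9\int kk_sk_{ss}k_{s^3}\dd+\frac32\int k^2k_{s^3}^2\dd$.
In that step the $6k_s^3$ part of $(k^3)_{s^3}$ needs $\int k_s^3k_{s^3}\dd=-3\int k_s^2k_{ss}^2\dd$, which you never state.

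For the sextic part you still need two identities, each following from your identity for $k^3k_s^2k_{ss}$ after one or two integrations by parts:
$\int k^4k_sk_{s^3}\dd=4\int k^2k_s^4\dd-\int k^4k_{ss}^2\dd$ and $\int k^5k_{s^4}\dd=-20\int k^2k_s^4\dd+5\int k^4k_{ss}^2\dd$.
Together with the direct term these give $-\frac14-\frac12-\frac52=-\frac{13}4$ and $2+10=12$, which closes \eqref{eq:elastic}.

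Finally, your proposed circle test is vacuous. Every monomial in \eqref{eq:elastic} contains a derivative of $k$, so both sides vanish on circles whatever the coefficients. Like the weight check, it cannot detect an arithmetic error in the coefficients.
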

\begin{proof}
Differentiating the integral with its moving measure gives
\begin{equation}\label{eq:elastic-first}
 \frac{dE_0}{dt}=\int\left(k_{ss}+\frac12k^3\right)V\dd.
\end{equation}
The identities needed to expand this expression are
\begin{align*}
 \int kk_{ss}^3\dd
   &=-\int k_s^2k_{ss}^2\dd-2\int kk_sk_{ss}k_{s^3}\dd,\\
 \int k^2k_{ss}k_{s^4}\dd
   &=-2\int kk_sk_{ss}k_{s^3}\dd-\int k^2k_{s^3}^2\dd,\\
 \int k^3k_{s^6}\dd
   &=18\int k_s^2k_{ss}^2\dd-18\int kk_sk_{ss}k_{s^3}\dd
                                  -3\int k^2k_{s^3}^2\dd,\\
 \int k^4k_sk_{s^3}\dd
   &=4\int k^2k_s^4\dd-\int k^4k_{ss}^2\dd,\\
 \int k^5k_{s^4}\dd
   &=-20\int k^2k_s^4\dd+5\int k^4k_{ss}^2\dd.
\end{align*}
For the third identity, integrate twice to obtain
$$\int k^3k_{s^6}\dd=\int(6kk_s^2+3k^2k_{ss})k_{s^4}\dd.$$
Then use the second identity and
$$\int kk_s^2k_{s^4}\dd=3\int k_s^2k_{ss}^2\dd
                           -2\int kk_sk_{ss}k_{s^3}\dd.$$
The other displayed identities follow by one or two integrations by parts. Expanding \eqref{eq:elastic-first} with these identities gives \eqref{eq:elastic}.
\end{proof}

\begin{lemma}\label{lem:turning}
The turning number $\omega=(2\pi)^{-1}\int k\dd$ is preserved by every smooth solution of \eqref{eq:flow}.
\end{lemma}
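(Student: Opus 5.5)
We compute the time derivative of $\int k\dd$ directly from the variation formulas \eqref{eq:variation}, applied with $\varphi=V$. The product rule and the moving measure give
\[
 \frac{d}{dt}\int k\dd=\int\bigl(\partial_tk\bigr)\dd+\int k\,\partial_t(ds)
 =\int\bigl(V_{ss}+k^2V\bigr)\dd-\int k^2V\dd=\int V_{ss}\dd .
\]
The cubic terms $k^2V$ cancel exactly, so we never need the explicit form of $V$ in \eqref{eq:flow}. We only need that $V$ is a smooth periodic function of the parameter along the closed curve.

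The integrand $V_{ss}$ is the arc-length derivative of the smooth periodic function $V_s$. Hence $\int V_{ss}\dd=0$; this is the same vanishing of boundary terms already used in the derivation of \eqref{eq:dissipation}. Therefore $\int k\dd$ is constant in time, and so is $\omega=(2\pi)^{-1}\int k\dd$.

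There is one point to check rather than a real obstacle. The formula $\partial_tk=\varphi_{ss}+k^2\varphi$ in \eqref{eq:variation} must hold for a purely normal velocity and in a fixed parametrisation, which is exactly the setting of \eqref{eq:flow}. The claim is also invariant under tangential reparametrisations, since $\int k\dd$ is geometric.

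As an alternative consistency check, one can argue by continuity. The quantity $\int k\dd=\int d\theta$, with $\theta$ the tangent angle, lies in $2\pi\mathbb Z$ for every closed immersion. It depends continuously on $t$ along a smooth family of immersions, so it must be constant. Either argument suffices, and I would present the first, computational one.
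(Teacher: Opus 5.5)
Your proof is correct and is the paper's argument: the paper also writes $2\pi\omega'=\int(k_t-k^2V)\,ds=\int V_{ss}\,ds=0$, using \eqref{eq:variation} with $\varphi=V$ and periodicity. Your alternative continuity remark (that $\int k\,ds\in 2\pi\mathbb Z$ depends continuously on $t$) is also valid, but it is not needed.
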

\begin{proof}
Equations \eqref{eq:variation} give
$$2\pi\omega'=\int(k_t-k^2V)\dd=\int V_{ss}\dd=0.$$
\end{proof}

\section{Stationary curves}\label{sect3}

Here is the proof of Theorem \ref{thm:stationary}.
\begin{proof}
Suppose $V=0$. Retaining the quantities used in the first-integral calculation, write
$$M=k_{s^5},\qquad N=kk_{s^4}-k_sk_{s^3}+\frac12k_{ss}^2.$$
Their derivatives satisfy
$$M_s=-kN,\qquad N_s=kM.$$
The first identity is exactly $V=0$, and the second follows by differentiating $N$ and cancelling the two terms $k_sk_{s^4}$ and the two terms $k_{ss}k_{s^3}$. No division by $k$ is involved.

On the arclength interval $[0,L]$, choose a real tangent angle $\theta$ with $\tau=(\cos\theta,\sin\theta)$ and $\theta_s=k$. Identifying $\R^2$ with $\mathbb C$, the preceding equations imply
$$\bigl((M+iN)e^{-i\theta}\bigr)_s=0.$$
Closedness gives $\int_0^L e^{i\theta}\dd=\int_0^L\gamma_s\dd=0$, so $\int_0^L(M+iN)\dd=0$. On the other hand,
$$\int_0^L N\dd
 =\int_0^L\left(kk_{s^4}-k_sk_{s^3}+\frac12k_{ss}^2\right)\dd
 =\frac52\int_0^L k_{ss}^2\dd.$$
It follows that $k_{ss}=0$. Periodicity then gives $k_s=0$, so $k$ is constant. A constant zero curvature would make $\gamma$ a straight line and contradict closedness of an immersion. Integrating the Frenet equations with constant nonzero curvature yields a circle, and closure requires $kL=2\pi\omega$ with $\omega\ne0$. Conversely, all curvature derivatives vanish on a multiply covered circle, so $V=0$.
\end{proof}

The same classification has a short variational verification: $E[\lambda\gamma]=\lambda^{-5}E[\gamma]$. At a stationary curve the first variation under dilation vanishes, and therefore $E[\gamma]=0$. The argument above also repairs the first-integral proof without an analyticity assumption or a rescaling of its constant.

\section{Existence and convergence}

\subsection{Local existence and the parametrisation}

For a constant-speed parametrisation $x\in\R/\mathbb Z$, we have $ds=L\,dx$. To fix parametrisation, let $\alpha$ be the unique periodic function satisfying
\begin{equation}\label{eq:alpha}
 \alpha_s=kV-\frac1L\int kV\dd,\qquad \int\alpha\dd=0,
\end{equation}
and consider
\begin{equation}\label{eq:gauge}
 \gamma_t=V\nu+\alpha\tau.
\end{equation}
The right-hand side of \eqref{eq:alpha} has zero mean, which gives a periodic primitive; the mean condition fixes its additive constant. Differentiation gives
$$\partial_t|\gamma_x|=(\alpha_s-kV)|\gamma_x|
                =-\frac1L\left(\int kV\dd\right)|\gamma_x|.$$
Thus a constant-speed initial parametrisation remains constant-speed. The tangential term changes only parametrisation, and all the preceding integral identities remain valid. For example, its contribution to the derivative of $\int f\dd$ is $\int(\alpha f_s+\alpha_s f)\dd=0$.

We have the standard local existence of the flow, see the $m=2$ case of \cite[Theorems 5.2 and 5.3]{MW}.
\begin{theorem}\label{thm:local}
Every smooth closed immersion, parametrised with constant speed, has a unique maximal smooth solution of \eqref{eq:gauge} on $[0,T_{\max})$. If $T_{\max}<\infty$, two-sided positive length bounds together with bounds for every curvature Sobolev norm imply smooth continuation past $T_{\max}$.
\end{theorem}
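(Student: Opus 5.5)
The statement is Theorem~\ref{thm:local}. The paper itself treats it as imported from the $m=2$ case of \cite[Theorems 5.2 and 5.3]{MW}, so the plan is to reduce \eqref{eq:gauge} to the framework covered there, not to redo the parabolic theory.

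\emph{Step 1: reduce to a fixed domain.} Since $ds=L\,dx$ and the gauge \eqref{eq:alpha} keeps the speed constant, the unknown is a map $\gamma:\R/\mathbb Z\times[0,T)\to\R^2$. Expanding $V\nu$ in $x$-derivatives gives
$$\partial_t\gamma=-|\gamma_x|^{-8}\gamma_{x^8}+\text{lower order}.$$
The lower-order terms are smooth in $\gamma_x,\dots,\gamma_{x^7}$ wherever $|\gamma_x|>0$. The tangential component of the top-order term is harmless, because it only shifts the parametrisation. I would either

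\begin{itemize}
\item use the DeTurck-type choice of $\alpha$ built into \cite{MW}, or
\item solve the analytically convenient gauge $\gamma_t=-|\gamma_x|^{-8}\gamma_{x^8}+\dots$ first, then reparametrise by solving the ODE for diffeomorphisms $\phi(\cdot,t)$ that restores \eqref{eq:alpha}.
\end{itemize}

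In either case the resulting equation is strictly parabolic of order eight. Linearising at $\gamma_0$ and applying the standard contraction argument in parabolic H\"older spaces $C^{8+\beta,1+\beta/8}$ gives short-time existence. Smoothness for $t>0$ then follows by bootstrapping.

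\emph{Step 2: uniqueness and maximality.} Two constant-speed solutions of \eqref{eq:gauge} with the same data are geometric solutions of \eqref{eq:flow}. After translating both into the parabolic gauge of Step~1, they coincide by uniqueness for that equation. The parabolic-gauge solution fixes the image curve, and \eqref{eq:alpha} determines the constant-speed parametrisation up to the normalisation $\int\alpha\,ds=0$ (together with the initial parametrisation). Hence the two solutions coincide. Gluing the local solutions gives a maximal interval $[0,T_{\max})$.

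\emph{Step 3: continuation criterion.} Suppose $T_{\max}<\infty$ and that
$$0<c\le L\le C \quad\text{and}\quad \|\partial_s^jk\|_{L^2}\le C_j \ \text{for all } j.$$
By the one-dimensional Sobolev embedding, all $\partial_s^jk$ are uniformly bounded. In the constant-speed parametrisation $|\gamma_x|=L$, and $\gamma_{x^{j+1}}$ is a polynomial in $L$, $k,\dots,k_{s^{j-1}}$ and $\tau,\nu$. So every $C^j$ norm of $\gamma$ is bounded on $[0,T_{\max})$ and $|\gamma_x|$ is bounded below. The velocity is then uniformly bounded, so $\gamma(t)$ converges smoothly to an immersion $\gamma(T_{\max})$ as $t\to T_{\max}$, after subtracting nothing, since $\gamma_t$ is bounded. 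Restarting Step~1 from $\gamma(T_{\max})$ contradicts maximality.

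\emph{Main obstacle.} The delicate point is Step~1's handling of the degenerate tangential direction. A naive substitution of \eqref{eq:alpha} yields a nonlocal term, since $\alpha$ is a primitive of $kV$ and so contains ninth-order information in tangential form. I would handle this by working in the parabolic gauge and only afterwards reparametrising. This is exactly what \cite{MW} provide, so I would rely on their theorems for this part.
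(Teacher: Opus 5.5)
Your proposal follows essentially the paper's route: the paper gives no independent argument and simply invokes the $m=2$ case of McCoy--Wheeler's Theorems 5.2 and 5.3, with solutions of \eqref{eq:flow} and \eqref{eq:gauge} related by a reparametrisation ODE on $\R/\mathbb Z$, which is the same mechanism as your second bullet. Your Step~3 just spells out the continuation criterion that the paper cites, and one small correction to your heuristics: $kV$ is of eighth order in $\gamma$, so $\alpha$ is a nonlocal term of order seven, not nine, and the expansion $-|\gamma_x|^{-8}\gamma_{x^8}+$ lower order for $V\nu$ holds only under the constant-speed constraint (in a general parametrisation the top-order part is $-|\gamma_x|^{-8}$ times the normal projection of $\gamma_{x^8}$, which is the degeneracy that forces the gauge).
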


The hypotheses of the cited results match our smooth periodic immersion and constant-speed parametrisation; their normal velocity is exactly $V$. A solution of the original equation \eqref{eq:flow} is recovered by solving the parameter transport equation with velocity $-\alpha/L$ on $\R/\mathbb Z$. Conversely, a smooth solution of \eqref{eq:flow} can be put in the gauge \eqref{eq:gauge}. We henceforth work with this representative and state geometric uniqueness modulo reparametrisation.

\subsection{Small energy, length bounds and continuation}\label{sect4}

\begin{lemma}\label{lem:poincare}
If $f$ is a smooth periodic function on an interval of length $L$ and $\int f\dd=0$, then
\begin{equation}\label{eq:poincare}
 \|f\|_2^2\le\frac{L^2}{(2\pi)^2}\|f_s\|_2^2,\qquad
 \|f\|_\infty^2\le\frac{L}{2\pi}\|f_s\|_2^2.
\end{equation}
For a smooth closed immersion, these inequalities imply
\begin{align}\label{eq:curvature-bounds}
 L\int(k-\bar k)^2\dd&\le\frac{\eps}{(2\pi)^4},\notag\\
 L\|k-\bar k\|_\infty&\le\frac{\sqrt\eps}{(2\pi)^{3/2}},\notag\\
 (2\pi\omega)^2\le L\int k^2\dd
       &\le(2\pi\omega)^2+\frac{\eps}{(2\pi)^4}.
\end{align}
\end{lemma}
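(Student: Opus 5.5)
The plan is to prove the two inequalities in \eqref{eq:poincare} by Fourier series, and then to apply them repeatedly to $k_s$, $k-\bar k$ and $k$ to get \eqref{eq:curvature-bounds}.

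\emph{Step 1: the Wirtinger inequality.} Parametrise the interval by $s\in[0,L]$ and expand $f=\sum_{n\ne0}\hat f_n e^{2\pi ins/L}$; the $n=0$ term is absent because $\int f\dd=0$. Parseval gives
\[\|f\|_2^2=L\sum_{n\ne0}|\hat f_n|^2\quad\text{and}\quad \|f_s\|_2^2=L\sum_{n\ne0}(2\pi n/L)^2|\hat f_n|^2.\]
Since $|n|\ge1$, the first inequality follows term by term.

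\emph{Step 2: the sup bound.} I will not use the crude Cauchy--Schwarz estimate on the Fourier series, since it loses a constant. Instead:
\begin{itemize}
\item Because $f$ has zero mean and is continuous, it vanishes at some point $s_0$.
\item For any $s$, both arcs from $s_0$ to $s$ give representations of $f(s)$, so
\[2|f(s)|\le\int_0^L|f_s|\dd\le\sqrt L\,\|f_s\|_2,\]
which yields $\|f\|_\infty^2\le\frac L4\|f_s\|_2^2$.
\item This is already stronger than the claimed $\frac{L}{2\pi}\|f_s\|_2^2$, because $\frac14<\frac1{2\pi}$.
\end{itemize}
The sharp constant $L/12$, obtained by optimising a Fourier sum against $\sum 1/n^2$, could be used instead, but it is not needed.

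\emph{Step 3: the curvature bounds.}
\begin{itemize}
\item Periodicity gives $\int k_s\dd=\int k_{ss}\dd=0$, and $\int(k-\bar k)\dd=0$ by the definition of $\bar k=2\pi\omega/L$.
\item Applying the $L^2$ inequality to $k_s$, and then to $k-\bar k$, gives
\[\int(k-\bar k)^2\dd\le\frac{L^4}{(2\pi)^4}\int k_{ss}^2\dd.\]
Multiplying by $L$ and recalling \eqref{eq:epsilon} gives the first line.
\item For the sup bound, apply the $L^\infty$ inequality to $k-\bar k$ and then the $L^2$ inequality to $k_s$:
\[\|k-\bar k\|_\infty^2\le\frac{L}{2\pi}\|k_s\|_2^2\le\frac{L^3}{(2\pi)^3}\int k_{ss}^2\dd=\frac{\eps}{(2\pi)^3L^2}.\]
Taking square roots and multiplying by $L$ gives the second line.
\item For the third line, expand about the mean, where the cross term vanishes:
\[\int k^2\dd=\int(k-\bar k)^2\dd+L\bar k^2.\]
Since $L\cdot L\bar k^2=(2\pi\omega)^2$, multiplying by $L$ and using the first line gives both the lower bound and the upper bound.
\end{itemize}

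\emph{Main obstacle.} The only delicate point is the constant in the $L^\infty$ inequality. One must avoid a lossy Fourier argument, since $\sum_{n\ne0}n^{-2}$-type bounds must land below $1/(2\pi)$. The vanishing-point argument in Step 2 settles this cleanly. Everything else is bookkeeping of powers of $L$ and $2\pi$.
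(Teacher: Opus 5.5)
There is a genuine gap in Step~2: the comparison $\frac14<\frac1{2\pi}$ is false. Since $2\pi>4$, we have $\frac1{2\pi}\approx0.159<\frac14$. Your vanishing-point argument therefore gives only $\|f\|_\infty^2\le\frac L4\|f_s\|_2^2$, which is strictly weaker than the second inequality in \eqref{eq:poincare}. Carried into Step~3, it yields only $L\|k-\bar k\|_\infty\le\sqrt\eps/(4\pi)$. Since $4\pi<(2\pi)^{3/2}$, this falls short of the second line of \eqref{eq:curvature-bounds}, whose constant later feeds the explicit threshold \eqref{eq:threshold}.

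Sharpening the steps cannot rescue this route. It uses only that $f$ vanishes at one point, and for that class of functions $\frac14$ is optimal. A smoothed tent function vanishing at $s_0$ and peaking at $s_0+L/2$ shows this. Getting below $\frac1{2\pi}$ requires using the zero-mean condition in full.

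The repair is precisely the argument you set aside, which is the paper's proof. Write $|f(s)|\le\sum_{n\ne0}|\hat f_n|$ and apply Cauchy--Schwarz with weights $2\pi|n|/L$:
\[
|f(s)|^2\le\Bigl(\sum_{n\ne0}\frac{L^2}{4\pi^2n^2}\Bigr)\sum_{n\ne0}\frac{4\pi^2n^2}{L^2}|\hat f_n|^2=\frac{L^2}{12}\cdot\frac{\|f_s\|_2^2}{L}=\frac{L}{12}\|f_s\|_2^2 .
\]
Then $\frac1{12}<\frac1{2\pi}$ because $2\pi<12$. If you prefer a real-variable argument, write a zero-mean $f$ as $f(s)=\int_0^LK(s-t)f_s(t)\,dt$. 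Here $K$ is the $L$-periodic sawtooth equal to $\frac12-\frac uL$ on $(0,L)$, and $\int_0^LK^2\,du=\frac L{12}$, so Cauchy--Schwarz gives the same bound.

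Your closing paragraph thus has the comparison backwards. The Fourier bound is the sharp one, since $\frac1{12}$ is the best constant for zero-mean functions; the single-zero bound is the lossy one. Steps~1 and~3 are correct and coincide with the paper once the sup inequality holds with a constant at most $\frac1{2\pi}$.
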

\begin{proof}
Expand $f(s)=\sum_{n\ne0}\widehat f_n e^{2\pi ins/L}$. Parseval gives the first inequality. Cauchy--Schwarz gives
$$|f(s)|^2\le\left(\sum_{n\ne0}\frac{L^2}{4\pi^2n^2}\right)
                \sum_{n\ne0}\frac{4\pi^2n^2}{L^2}|\widehat f_n|^2
          =\frac{L}{12}\|f_s\|_2^2
          \le\frac{L}{2\pi}\|f_s\|_2^2.$$
Apply these inequalities successively to $k-\bar k$ and $k_s$, using $\int k_s\dd=0$. Finally, $\int k^2\dd=L\bar k^2+\int(k-\bar k)^2\dd$, which proves the last line.
\end{proof}

\begin{lemma}\label{lem:monotonicity}
For $\omega\in\mathbb Z$, let $\eps_*(\omega)$ be the unique positive solution of \eqref{eq:threshold}.
Every smooth solution of \eqref{eq:flow} satisfies
\begin{align}\label{eq:monotonicity}
 \frac{dE_0}{dt}\le
 -\left[\frac4{13}-\frac{113\eps}{2(2\pi)^5}
 -\frac{12\eps}{(2\pi)^5}\left(|\omega|+
                       \frac{\sqrt\eps}{(2\pi)^{5/2}}\right)^2\right]
 \int k_{s^4}^2\dd.
\end{align}
Consequently $E_0$ is non-increasing on any interval where $\eps\le\eps_*(\omega)$. On an interval where $\eps\le\eps_*(\omega)/2$, it satisfies $E_0'\le-(2/13)\int k_{s^4}^2\dd$.
\end{lemma}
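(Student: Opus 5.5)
We start from the exact identity \eqref{eq:elastic} of Lemma~\ref{lem:elastic} and bound every term other than $-\int k_{s^4}^2\dd$ by a multiple of $\eps\int k_{s^4}^2\dd$, or by a small absolute fraction of $\int k_{s^4}^2\dd$. The basic tools are the Poincar\'e and Sobolev inequalities of Lemma~\ref{lem:poincare}, applied to the mean-zero functions $k-\bar k,k_s,k_{ss},k_{s^3}$. They give
\begin{align*}
\|k_{s^j}\|_2^2\le \Bigl(\tfrac{L}{2\pi}\Bigr)^{2(4-j)}\|k_{s^4}\|_2^2 \quad (1\le j\le 4),\qquad
\|k_{s^j}\|_\infty^2\le \tfrac{L}{2\pi}\|k_{s^{j+1}}\|_2^2 .
\end{align*}
We also use $\|k\|_\infty\le|\bar k|+\|k-\bar k\|_\infty\le \frac{1}{L}\bigl(2\pi|\omega|+\sqrt\eps/(2\pi)^{3/2}\bigr)$, which equals $\frac{2\pi}{L}\bigl(|\omega|+\sqrt\eps/(2\pi)^{5/2}\bigr)$. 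This is the source of the bracket $(|\omega|+\sqrt\eps/(2\pi)^{5/2})^2$ in \eqref{eq:threshold}.

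First we treat the terms that involve no $k$ without derivatives and the mixed term. For $-\frac{17}{2}\int k_s^2k_{ss}^2$ we use only that it is $\le0$; we may simply drop it, or use part of it to absorb the cross term. For $13\int kk_sk_{ss}k_{s^3}$ we apply Young's inequality, $13|kk_sk_{ss}k_{s^3}|\le \frac{17}{2}k_s^2k_{ss}^2+\frac{169}{34}k^2k_{s^3}^2$. Alternatively, we bound it directly by $13\|k\|_\infty\|k_s\|_\infty\|k_{ss}\|_2\|k_{s^3}\|_2$. I expect the intended route to absorb the cross term into the $k_s^2k_{ss}^2$ term, with the remainder going into $k^2k_{s^3}^2$ and the fraction $\frac{9}{13}$ of $\int k_{s^4}^2$; this explains the leading constant $\frac4{13}$. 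Concretely, I would try to write $13 kk_sk_{ss}k_{s^3}$ using one integration by parts, as in the identities of Lemma~\ref{lem:elastic}. Then I would estimate $\int k_s^2k_{ss}^2\le\|k_s\|_\infty^2\|k_{ss}\|_2^2$, which is controlled by $\eps$-free constants times $\int k_{s^4}^2$ only through $L$-scaling. More precisely, it is bounded by $\frac{L}{2\pi}\|k_{ss}\|_2^2\cdot(\frac{L}{2\pi})^4\|k_{s^4}\|_2^2$, and since $L^5\|k_{ss}\|_2^2=\eps$ this is exactly $\frac{\eps}{(2\pi)^5}\|k_{s^4}\|_2^2$. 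This scaling check is the key observation: every quartic term without an explicit $k$ factor produces $\eps/(2\pi)^5$, which accounts for the $\frac{113}{2}$ coefficient as a sum of such contributions.

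Next we handle the terms with explicit powers of $k$. For $\frac52\int k^2k_{s^3}^2$ and $12\int k^2k_s^4$ we put $k^2$ in $L^\infty$ using the bound above. Then $\int k_{s^3}^2\le(\frac{L}{2\pi})^2\int k_{s^4}^2$ and $\int k_s^4\le\|k_s\|_\infty^2\int k_s^2\le \frac{L}{2\pi}\|k_{ss}\|_2^2(\frac{L}{2\pi})^6\|k_{s^4}\|_2^2$. This produces terms of the form $\frac{\eps}{(2\pi)^5}\cdot 12(|\omega|+\cdots)^2\int k_{s^4}^2$ after using $L^5\|k_{ss}\|_2^2=\eps$. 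The $k^2k_{s^3}^2$ term is problematic because it gives no $\eps$ factor, only $|\omega|^2$. So I would instead integrate it by parts into $-\int k^2k_{ss}k_{s^4}-2\int kk_sk_{ss}k_{s^3}$ and bound $\|k_{ss}\|_\infty$, which brings in $\eps$. The term $-\frac{13}{4}\int k^4k_{ss}^2$ is nonpositive and can be used to absorb leftovers by Young's inequality.

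The main obstacle is the bookkeeping: every term must carry a factor $\eps$ except a fixed fraction $\frac{9}{13}$ of $\int k_{s^4}^2$, and the constants must match $\frac{113}{2}$ and $12$. Once \eqref{eq:monotonicity} holds, the consequences are immediate. The bracket is decreasing in $\eps$ and vanishes at $\eps_*$, so $E_0'\le0$ when $\eps\le\eps_*$. For $\eps\le\eps_*/2$, the two $\eps$-terms are at most half their values at $\eps_*$ (the squared factor only decreases), so the bracket is at least $\frac4{13}-\frac12\cdot\frac4{13}=\frac2{13}$.
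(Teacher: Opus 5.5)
Your bounds for $\int k_s^2k_{ss}^2\dd$ and $\int k^2k_s^4\dd$ are correct and agree with the paper. So is your derivation of the two consequences. The genuine gap is where you yourself flag trouble: the terms $13\int kk_sk_{ss}k_{s^3}\dd$ and $\frac52\int k^2k_{s^3}^2\dd$.

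No bound of the form $\int k^2k_{s^3}^2\dd\le C\eps\int k_{s^4}^2\dd$ can hold. For $k=\bar k+\delta\phi$ near a multiply covered circle, the left side is $\approx\bar k^2\delta^2\int\phi_{s^3}^2\dd$, which is quadratic in $\delta$. By contrast, $\eps\int k_{s^4}^2\dd$ is quartic in $\delta$. Hence integrating by parts and bounding $\|k_{ss}\|_\infty$ cannot ``bring in $\eps$''. Putting $k^2$ in $L^\infty$ instead gives a coefficient at least $\frac52\omega^2\ge\frac52$ in front of $\int k_{s^4}^2\dd$, which the leading term cannot absorb.

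Your Young splitting $13|kk_sk_{ss}k_{s^3}|\le\frac{17}{2}k_s^2k_{ss}^2+\frac{169}{34}k^2k_{s^3}^2$ makes this worse. It converts a term that is cubic near the circle into extra weight on the only term with no $\eps$ factor, for a total coefficient $\frac52+\frac{169}{34}=\frac{127}{17}$.

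The missing idea is to control that term by the two dissipative terms of \eqref{eq:elastic}, not by $\eps$. Write $a=\int k_{s^4}^2\dd$, $b=\int k^4k_{ss}^2\dd$, $X=\int kk_sk_{ss}k_{s^3}\dd$ and $Y=\int k^2k_{s^3}^2\dd$.

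1. Follow your integration by parts $Y=-\int k^2k_{ss}k_{s^4}\dd-2X$ with Cauchy--Schwarz, choosing the weight $k^4$ to match the good term $-\frac{13}{4}b$. This gives $|\int k^2k_{ss}k_{s^4}\dd|\le\sqrt{ab}$, and after absorbing $X$, $Y\le2\sqrt{ab}+4\int k_s^2k_{ss}^2\dd$.
2. The same identity rewrites $13X+\frac52Y$ as $8X-\frac52\int k^2k_{ss}k_{s^4}\dd$.
3. Split the cross term with almost all its weight on the $\eps$-small quantity: $8|X|\le\frac14Y+64\int k_s^2k_{ss}^2\dd$.
4. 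Collecting terms gives $3\sqrt{ab}$ plus $(-\frac{17}{2}+64+1)\int k_s^2k_{ss}^2\dd=\frac{113}{2}\int k_s^2k_{ss}^2\dd$.
5. The sharp scalar inequality $3\sqrt{ab}-\frac{13}{4}b\le\frac{9}{13}a$ then produces the $\omega$-independent loss $\frac{9}{13}$, and hence the constant $\frac4{13}$.

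In general, $\beta\sqrt{ab}-\frac{13}{4}b$ is bounded only by $\frac{\beta^2}{13}a$. For a single Fourier mode near the circle, $Y\approx\sqrt{ab}$. So the effective coefficient of $\sqrt{ab}$ must stay below $\sqrt{13}$. With your splitting it is at least $\frac{127}{17}>\sqrt{13}$. For instance, with $|\omega|=1$ and a mode-$2$ perturbation, the resulting upper bound for $E_0'$ is positive, so the estimate cannot close.
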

\begin{proof}
Integration by parts gives
$$\int k^2k_{s^3}^2\dd=-\int k^2k_{ss}k_{s^4}\dd
                          -2\int kk_sk_{ss}k_{s^3}\dd.$$
By Cauchy--Schwarz and $2ab\le\frac12a^2+2b^2$,
\begin{align*}
 \int k^2k_{s^3}^2\dd
 &\le\left(\int k_{s^4}^2\dd\int k^4k_{ss}^2\dd\right)^{1/2}
       +2\left(\int k_s^2k_{ss}^2\dd\int k^2k_{s^3}^2\dd\right)^{1/2}\\
 &\le\left(\int k_{s^4}^2\dd\int k^4k_{ss}^2\dd\right)^{1/2}
       +\frac12\int k^2k_{s^3}^2\dd+2\int k_s^2k_{ss}^2\dd.
\end{align*}
Thus
$$\int k^2k_{s^3}^2\dd\le
 2\left(\int k_{s^4}^2\dd\int k^4k_{ss}^2\dd\right)^{1/2}
 +4\int k_s^2k_{ss}^2\dd.$$
The same integral identity rewrites the two mixed terms in \eqref{eq:elastic} as
$$13\int kk_sk_{ss}k_{s^3}\dd+\frac52\int k^2k_{s^3}^2\dd
 =8\int kk_sk_{ss}k_{s^3}\dd-\frac52\int k^2k_{ss}k_{s^4}\dd.$$
Using $8ab\le\frac14a^2+64b^2$, we have
$$8\left|\int kk_sk_{ss}k_{s^3}\dd\right|
 \le\frac14\int k^2k_{s^3}^2\dd+64\int k_s^2k_{ss}^2\dd.$$
Substitution into \eqref{eq:elastic} now gives
\begin{align*}
 E_0'\le{}&-\int k_{s^4}^2\dd+\frac{113}{2}\int k_s^2k_{ss}^2\dd
            +12\int k^2k_s^4\dd\\
          &+3\left(\int k_{s^4}^2\dd\int k^4k_{ss}^2\dd\right)^{1/2}
            -\frac{13}{4}\int k^4k_{ss}^2\dd\\
 \le{}&-\frac4{13}\int k_{s^4}^2\dd
       +\frac{113}{2}\int k_s^2k_{ss}^2\dd+12\int k^2k_s^4\dd.
\end{align*}
The last inequality is the scalar inequality $3\sqrt{ab}-(13/4)b\le(9/13)a$ for $a,b\ge0$.

From \eqref{eq:poincare},
\begin{align*}
 \int k_s^2k_{ss}^2\dd
 &\le\frac{L}{2\pi}\left(\int k_{ss}^2\dd\right)^2
 \le\frac{\eps}{(2\pi)^5}\int k_{s^4}^2\dd,\\
 \int k^2k_s^4\dd
 &\le\|k\|_\infty^2\|k_s\|_\infty^2\int k_s^2\dd\\
 &\le\frac{\eps}{(2\pi)^5}
       \left(|\omega|+\frac{\sqrt\eps}{(2\pi)^{5/2}}\right)^2
       \int k_{s^4}^2\dd.
\end{align*}
These prove \eqref{eq:monotonicity}. The left-hand side of \eqref{eq:threshold}, as a function of $\eps_*$, is continuous, strictly increasing from zero to infinity. The positive root therefore exists and is unique. At $\eps\le\eps_*/2$, this function is at most half its value at $\eps_*$, giving the final assertion.
\end{proof}

\begin{corollary}\label{cor:lower-length}
If $\omega\ne0$ and $\eps(t)\le\eps_*(\omega)$ throughout a smooth existence interval, then
\begin{equation}\label{eq:lower-length}
 L(t)\ge\frac{(2\pi\omega)^2}{2E_0[\gamma_0]}>0.
\end{equation}
\end{corollary}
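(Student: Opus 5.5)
The bound follows from combining the monotonicity of $E_0$ from Lemma~\ref{lem:monotonicity} with the lower inequality in the last line of \eqref{eq:curvature-bounds}.

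First, since $\eps(t)\le\eps_*(\omega)$ on the whole existence interval, Lemma~\ref{lem:monotonicity} shows that $E_0[\gamma(t)]$ is non-increasing there. Hence
\[
 E_0[\gamma(t)]\le E_0[\gamma_0]
\]
for every $t$ in the interval. The bracket in \eqref{eq:monotonicity} is nonnegative as long as $\eps\le\eps_*$, because the left-hand side of \eqref{eq:threshold} is increasing in $\eps_*$. That is exactly the monotonicity statement of the lemma, so nothing new is needed here.

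Second, Lemma~\ref{lem:turning} says the turning number $\omega$ is preserved along the flow. Applying Lemma~\ref{lem:poincare} to the current curve $\gamma(t)$, the lower bound in the last line of \eqref{eq:curvature-bounds} gives
\[
 (2\pi\omega)^2\le L(t)\int_{\gamma(t)}k^2\dd=2L(t)E_0[\gamma(t)].
\]
This inequality is simply Cauchy--Schwarz: $(\int k\dd)^2\le L\int k^2\dd$.

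Combining the two steps yields
\[
 (2\pi\omega)^2\le 2L(t)E_0[\gamma_0].
\]
Since $\omega\ne0$, the left-hand side is positive. It follows that $E_0[\gamma_0]>0$, so we may divide and obtain \eqref{eq:lower-length}. The bound is strictly positive because $\omega\neq 0$ and $E_0[\gamma_0]<\infty$ for a smooth initial curve.

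There is no real obstacle. The only point to check is that the hypothesis $\eps\le\eps_*$ holds throughout the interval, which is assumed, so the monotonicity from Lemma~\ref{lem:monotonicity} can be integrated from $0$ to $t$.
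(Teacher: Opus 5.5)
Your proof is correct and is exactly the paper's argument: the Cauchy--Schwarz bound $(2\pi\omega)^2\le L\int k^2\dd=2LE_0(t)$ from \eqref{eq:curvature-bounds} is combined with $E_0(t)\le E_0[\gamma_0]$, which Lemma~\ref{lem:monotonicity} gives under the hypothesis $\eps\le\eps_*(\omega)$. Your extra remarks, namely conservation of $\omega$ via Lemma~\ref{lem:turning} and positivity of $E_0[\gamma_0]$ before dividing, are details the paper leaves implicit.
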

\begin{proof}
Use $(2\pi\omega)^2\le L\int k^2\dd=2LE_0(t)$ and Lemma~\ref{lem:monotonicity}.
\end{proof}

\begin{lemma}\label{lem:upper-length}
For $\sigma>0$, we define
\begin{equation}\label{eq:C-length}
 C(\sigma,\omega)=\frac92\left(\frac{\sigma}{(2\pi)^{3/2}}
                              +2\pi|\omega|\sqrt\sigma\right)^2.
\end{equation}
On any smooth existence interval on which $\eps(t)\le\sigma$,
\begin{equation}\label{eq:upper-length}
 L'\le\frac{C(\sigma,\omega)}{\sigma}\eps(t)L^{-7},\qquad
 L(t)^8\le L(0)^8+8C(\sigma,\omega)t.
\end{equation}
\end{lemma}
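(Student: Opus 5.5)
Start from the length formula \eqref{eq:length} and discard its two nonpositive terms:
$$L'\le\frac92\int k^2k_{ss}^2\dd\le\frac92\|k\|_\infty^2\int k_{ss}^2\dd.$$
The task is then to bound $\|k\|_\infty$ by scale-invariant quantities and express the result through $\eps=L^5\int k_{ss}^2\dd$.

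Lemma~\ref{lem:poincare} gives $\|k\|_\infty\le|\bar k|+\|k-\bar k\|_\infty$, with $|\bar k|=2\pi|\omega|/L$ and $L\|k-\bar k\|_\infty\le\sqrt\eps/(2\pi)^{3/2}$. Hence
$$L\|k\|_\infty\le 2\pi|\omega|+\frac{\sqrt\eps}{(2\pi)^{3/2}},$$
and therefore
$$L'\le\frac92\Bigl(2\pi|\omega|+\frac{\sqrt\eps}{(2\pi)^{3/2}}\Bigr)^2L^{-2}\int k_{ss}^2\dd=\frac92\Bigl(2\pi|\omega|+\frac{\sqrt\eps}{(2\pi)^{3/2}}\Bigr)^2\eps\,L^{-7}.$$

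To reach the form in \eqref{eq:upper-length}, use $\eps\le\sigma$, so that $\sqrt\eps\le\sqrt\sigma$. We need
$$\frac92\Bigl(2\pi|\omega|+\frac{\sqrt\sigma}{(2\pi)^{3/2}}\Bigr)^2\le\frac{C(\sigma,\omega)}{\sigma}=\frac92\Bigl(\frac{\sqrt\sigma}{(2\pi)^{3/2}}+2\pi|\omega|\Bigr)^2,$$
where the equality comes from expanding $C(\sigma,\omega)/\sigma$: indeed $\bigl(\sigma/(2\pi)^{3/2}+2\pi|\omega|\sqrt\sigma\bigr)^2/\sigma=\bigl(\sqrt\sigma/(2\pi)^{3/2}+2\pi|\omega|\bigr)^2$. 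So the needed inequality holds with equality, and the first part of \eqref{eq:upper-length} follows.

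For the second part, write $\eps L^{-7}=L^{-2}\int k_{ss}^2\dd$, which gives
$$(L^8)'=8L^7L'\le\frac{8C}{\sigma}\,\eps\le 8C$$
because $\eps\le\sigma$. Integrating in $t$ gives $L(t)^8\le L(0)^8+8C(\sigma,\omega)t$.

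The only step that needs care is the bound on $\|k\|_\infty$. It requires $\bar k$ to be controlled by the preserved turning number (Lemma~\ref{lem:turning}) and the correct power of $L$ from the $L^\infty$ Poincaré estimate; the rest is algebra.
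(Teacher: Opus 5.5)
Your proposal is correct and follows the paper's proof essentially verbatim. You drop the two nonpositive terms in \eqref{eq:length}, bound $L\|k\|_\infty\le 2\pi|\omega|+\sqrt\eps/(2\pi)^{3/2}$ via \eqref{eq:curvature-bounds}, replace $\sqrt\eps$ by $\sqrt\sigma$ using $C(\sigma,\omega)/\sigma=\frac92\bigl(\sqrt\sigma/(2\pi)^{3/2}+2\pi|\omega|\bigr)^2$, and integrate $(L^8)'=8L^7L'\le 8C(\sigma,\omega)$. The appeal to Lemma~\ref{lem:turning} is not strictly needed, since $\bar k=2\pi\omega/L$ holds at each time by definition, but it does no harm.
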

\begin{proof}
Drop the two nonpositive terms in \eqref{eq:length} and apply \eqref{eq:curvature-bounds}:
$$L'\le\frac92\|k\|_\infty^2\int k_{ss}^2\dd
 \le\frac92\left(2\pi|\omega|+\frac{\sqrt\sigma}{(2\pi)^{3/2}}\right)^2
              \eps(t)L^{-7}.$$
This is the first inequality, and $\eps\le\sigma$ gives the second after differentiating $L^8$.
\end{proof}

Fix $\omega\ne0$. There exists a uniform $c_\omega>0$ such that every smooth closed immersion of turning number $\omega$ satisfies
\begin{equation}\label{eq:gradient}
 \int V^2\dd\ge c_\omega L^{-8}\int k_{ss}^2\dd.
\end{equation}
For the proof, we refer to the $m=2$ case of \cite[Theorem 1.4]{MW}.
These are exactly its required hypotheses; its velocity is $V$, and its energy is \eqref{eq:energy}. We absorb the factor $1/2$ into $c_\omega$, which is not numerically specified but can be estimated in principle.

\begin{proposition}\label{prop:ratio}
Define
\begin{align}\label{eq:epsilon1}
 \eps_1=\min\left\{\frac{\eps_*(\omega)}2,\ 1,\
 \frac{c_\omega(2\pi\omega)^4}
 {50\left[1+\frac14\left(2\pi|\omega|+(2\pi)^{-3/2}\right)^4
               \left((2\pi\omega)^2+(2\pi)^{-4}\right)\right]}\right\}.
\end{align}
Let $\gamma$ be a smooth solution of \eqref{eq:flow} with turning number $\omega\ne0$. On any interval where $\eps\le\eps_1$, the ratio
\begin{equation}\label{eq:ratio}
 Q(t)=\frac{\int k_{ss}^2\dd}{\left(\int k^2\dd\right)^5}
\end{equation}
satisfies
\begin{equation}\label{eq:ratio-decay}
 Q'\le-\frac{c_\omega}{2}L^{-8}Q,\qquad
 Q(t)\le Q(0)\exp\left(-\frac{c_\omega}{2}\int_0^t L(r)^{-8}\,dr\right)
\end{equation}
when the interval starts at $t=0$.
\end{proposition}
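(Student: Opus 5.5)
The plan is to differentiate $Q$ directly, use the gradient inequality \eqref{eq:gradient} on the numerator, and treat the denominator's derivative as a perturbation. That perturbation gets absorbed by Young's inequality, and the smallness $\eps\le\eps_1$ pays for what is left over.

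\textbf{Step 1: differentiate $Q$.} Write $A=\int k_{ss}^2\dd$ and $B=\int k^2\dd=2E_0$. By \eqref{eq:dissipation}, $A'=-2\int V^2\dd$. By \eqref{eq:elastic-first}, $B'=2\int W V\dd$ with $W=k_{ss}+\frac12k^3$. Hence
$$Q'=-\frac{2\int V^2\dd}{B^5}-\frac{10\,Q}{B}\int WV\dd.$$
Monotonicity of $E_0$ (Lemma~\ref{lem:monotonicity}) has the wrong sign to help here, because $-5QB'/B\ge0$. So I will not use the sign of $B'$. Instead I bound $|\int WV\dd|\le\|W\|_2\|V\|_2$ and apply Young's inequality:
$$\frac{10Q}{B}\|W\|_2\|V\|_2\le\frac{\int V^2\dd}{B^5}+25\,Q^2B^3\|W\|_2^2 .$$
This gives $Q'\le-\int V^2\dd/B^5+25Q^2B^3\|W\|_2^2$. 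Applying \eqref{eq:gradient} to the first term yields
$$Q'\le -c_\omega L^{-8}Q+25Q^2B^3\|W\|_2^2 .$$

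\textbf{Step 2: estimate $\|W\|_2^2$ by scale-invariant quantities.} First, $\|W\|_2^2\le2\int k_{ss}^2\dd+\frac12\int k^6\dd$. The first piece equals $2\eps L^{-5}$. For the second, $\int k^6\dd\le\|k\|_\infty^4\int k^2\dd$. By \eqref{eq:curvature-bounds} with $\eps\le1$, we have $L\|k\|_\infty\le2\pi|\omega|+(2\pi)^{-3/2}$ and $L\int k^2\dd\le(2\pi\omega)^2+(2\pi)^{-4}$. Together these give
$$\|W\|_2^2\le 2L^{-5}D,\qquad D=1+\tfrac14\bigl(2\pi|\omega|+(2\pi)^{-3/2}\bigr)^4\bigl((2\pi\omega)^2+(2\pi)^{-4}\bigr).$$
This $D$ is exactly the bracket appearing in \eqref{eq:epsilon1}, which indicates that this is the intended route.

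\textbf{Step 3: close the estimate.} Rewrite $Q^2B^3=Q\,A/B^2$. Use $A=\eps L^{-5}$ and the lower bound $B\ge(2\pi\omega)^2/L$ from \eqref{eq:curvature-bounds}. Then
$$25Q^2B^3\|W\|_2^2\le\frac{50D\,\eps}{(2\pi\omega)^4}\,L^{-8}Q .$$
The third entry of the minimum in \eqref{eq:epsilon1} makes this at most $\frac{c_\omega}{2}L^{-8}Q$, which proves the differential inequality in \eqref{eq:ratio-decay}. The integrated form then follows by Gronwall's inequality applied to $\log Q$; if $Q$ vanishes at some time, the curve is a circle and $Q$ stays zero.

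\textbf{Main obstacle.} The delicate point is Step 1. The naive use of monotonicity of $E_0$ fails because of the sign, so the whole proof depends on the Young splitting producing a perturbation that carries the same power $L^{-8}$ as the gradient-inequality term. Step 3 confirms that it does, so only the numerical constant must be kept track of. The requirement $\eps\le\eps_*(\omega)/2$ in the definition of $\eps_1$ is not needed for this proposition; it is needed elsewhere, for the length bounds.
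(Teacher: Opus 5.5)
Your Steps 1 and 3 match the paper's argument: the same differentiation of $Q$, the same splitting $10ab\le a^2+25b^2$, and the same use of $\int k_{ss}^2\dd=\eps L^{-5}$ and $L\int k^2\dd\ge(2\pi\omega)^2$. Step 2, however, loses a factor $2$ that the definition of $\eps_1$ has no room for. With $\|W\|_2^2\le 2L^{-5}D$, your perturbation is at most $\frac{50D\eps}{(2\pi\omega)^4}L^{-8}Q$. The third entry of \eqref{eq:epsilon1} gives only $\frac{50D\eps}{(2\pi\omega)^4}\le c_\omega$, not $c_\omega/2$. So the last claim of Step 3 is false. Your estimates allow the perturbation to be as large as the whole gradient term $c_\omega L^{-8}Q$, which leaves only $Q'\le0$ and does not yield \eqref{eq:ratio-decay}.

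Reweighting Young's inequality does not help. With $10ab\le\theta a^2+\frac{25}{\theta}b^2$ and $0<\theta\le2$, your bounds give $Q'\le-\bigl(2-\theta-\theta^{-1}\bigr)c_\omega L^{-8}Q$. But $2-\theta-\theta^{-1}=-\bigl(\sqrt\theta-1/\sqrt\theta\bigr)^2\le0$ for every such $\theta$, and $\theta>2$ is worse.

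The missing observation is that the cross term in $\|W\|_2^2$ has a favourable sign. One integration by parts gives $\int k^3k_{ss}\dd=-3\int k^2k_s^2\dd\le0$. Hence, for $\eps\le1$,
$$\|W\|_2^2=\int k_{ss}^2\dd+\int k^3k_{ss}\dd+\frac14\int k^6\dd
\le\int k_{ss}^2\dd+\frac14\|k\|_\infty^4\int k^2\dd\le L^{-5}D,$$
with no factor $2$. The perturbation is then at most $\frac{25D\eps}{(2\pi\omega)^4}L^{-8}Q\le\frac{c_\omega}{2}L^{-8}Q$, and the rest of your argument goes through. This is exactly how the paper closes the estimate.

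Your remark that $\eps\le\eps_*(\omega)/2$ is not used here is correct. For the integrated form, an integrating factor handles the case $Q=0$ without appealing to uniqueness.
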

\begin{proof}
Differentiate \eqref{eq:ratio} using \eqref{eq:dissipation} and \eqref{eq:elastic-first}:
\begin{align*}
 Q'={}&-\frac{2\int V^2\dd}{(\int k^2\dd)^5}
 -\frac{10\int k_{ss}^2\dd}{(\int k^2\dd)^6}
            \int\left(k_{ss}+\frac12k^3\right)V\dd\\
 \le{}&-\frac{\int V^2\dd}{(\int k^2\dd)^5}
 +\frac{25(\int k_{ss}^2\dd)^2}{(\int k^2\dd)^7}
            \int\left(k_{ss}+\frac12k^3\right)^2\dd.
\end{align*}
Here Cauchy--Schwarz and $10ab\le a^2+25b^2$ were applied after factoring out $(\int k^2\dd)^{-5}$.

The mixed term is nonpositive:
$$\int k^3k_{ss}\dd=-3\int k^2k_s^2\dd.$$
Thus \eqref{eq:curvature-bounds} gives
\begin{align*}
 \int\left(k_{ss}+\frac12k^3\right)^2\dd
 &\le\int k_{ss}^2\dd+\frac14\|k\|_\infty^4\int k^2\dd\\
 &\le L^{-5}\left[\eps+\frac14
       \left(2\pi|\omega|+\frac{\sqrt\eps}{(2\pi)^{3/2}}\right)^4
       \left((2\pi\omega)^2+\frac{\eps}{(2\pi)^4}\right)\right].
\end{align*}
Using $L\int k^2\dd\ge(2\pi\omega)^2$, the positive term in the estimate for $Q'$ is at most
\begin{align*}
 \frac{25\eps}{(2\pi\omega)^4}
 \left[\eps+\frac14\left(2\pi|\omega|+
              \frac{\sqrt\eps}{(2\pi)^{3/2}}\right)^4
              \left((2\pi\omega)^2+\frac{\eps}{(2\pi)^4}\right)\right]L^{-8}Q.
\end{align*}
For $\eps\le\eps_1\le1$, the coefficient is at most $c_\omega/2$ by \eqref{eq:epsilon1}. Equation \eqref{eq:gradient} bounds the negative term by $-c_\omega L^{-8}Q$. This proves the differential inequality. Multiplication by the integrating factor gives the second assertion, including the case $Q(0)=0$.
\end{proof}

\begin{corollary}\label{cor:barrier}
Suppose a maximal smooth solution has turning number $\omega\ne0$ and $\eps(0)\le\eps_2$. Then, throughout $[0,T_{\max})$,
\begin{equation}\label{eq:barrier}
 \eps(t)\le\frac{\eps_1}{2}
 \exp\left(-\frac{c_\omega}{2}\int_0^t L(r)^{-8}\,dr\right)<\eps_1,
\end{equation}
and
\begin{equation}\label{eq:two-sided-length}
 \frac{(2\pi\omega)^2}{2E_0[\gamma_0]}
 \le L(t)\le L(0)\exp\left(\frac{C(\eps_1,\omega)}{c_\omega}\right).
\end{equation}
\end{corollary}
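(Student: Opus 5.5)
The plan is a continuity (bootstrap) argument built on Proposition~\ref{prop:ratio}. The link between $\eps$ and $Q$ is the identity
$$\eps=L^5\int k_{ss}^2\dd=\Bigl(L\int k^2\dd\Bigr)^5 Q .$$
By \eqref{eq:curvature-bounds}, $(2\pi\omega)^2\le L\int k^2\dd\le(2\pi\omega)^2+\eps/(2\pi)^4$, so $Q$ and $\eps$ are comparable as long as $\eps$ stays bounded.

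\emph{Step 1 (bootstrap).} Let
$$T^*=\sup\{t<T_{\max}:\ \eps\le\eps_1\text{ on }[0,t]\}.$$
Since $\eps(0)\le\eps_2<\eps_1$ and $\eps$ is continuous, $T^*>0$. On $[0,T^*)$, Proposition~\ref{prop:ratio} applies. Combining it with the lower comparison at time $0$, $Q(0)\le\eps(0)/(2\pi\omega)^{10}$, and the upper comparison at time $t$ together with $\eps(t)\le\eps_1$, we get
$$\eps(t)\le\Bigl((2\pi\omega)^2+\tfrac{\eps_1}{(2\pi)^4}\Bigr)^5\frac{\eps(0)}{(2\pi\omega)^{10}}\exp\Bigl(-\frac{c_\omega}{2}\int_0^tL^{-8}\,dr\Bigr).$$
By the definition \eqref{eq:epsilon2} of $\eps_2$ and $\eps(0)\le\eps_2$, the prefactor is at most $\eps_1/2$. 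This gives \eqref{eq:barrier} on $[0,T^*)$, and in particular $\eps\le\eps_1/2$ there. If $T^*<T_{\max}$, continuity would give $\eps\le\eps_1/2<\eps_1$ on a neighbourhood of $T^*$, contradicting the definition of $T^*$. Hence $T^*=T_{\max}$, and \eqref{eq:barrier} holds throughout $[0,T_{\max})$. The strict inequality in \eqref{eq:barrier} is immediate, since the exponential is at most $1$ and $\eps_1>0$.

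\emph{Step 2 (lower length bound).} Since $\eps_1\le\eps_*(\omega)/2\le\eps_*(\omega)$, Corollary~\ref{cor:lower-length} applies on all of $[0,T_{\max})$ and yields the left inequality of \eqref{eq:two-sided-length}.

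\emph{Step 3 (upper length bound).} This is the only slightly delicate point. The crude bound $L^8\le L(0)^8+8Ct$ from Lemma~\ref{lem:upper-length} grows in time, so it is not enough; instead I use the decay in \eqref{eq:barrier}. Apply the first inequality of \eqref{eq:upper-length} with $\sigma=\eps_1$ and divide by $L$. Set $u(t)=\int_0^tL^{-8}\,dr$, so $u'=L^{-8}$. Then
$$(\log L)'\le\frac{C(\eps_1,\omega)}{\eps_1}\,\eps\,L^{-8}\le\frac{C(\eps_1,\omega)}{2}\,u'\,e^{-c_\omega u/2}.$$
Integrating from $0$ to $t$ gives
$$\log\frac{L(t)}{L(0)}\le\frac{C(\eps_1,\omega)}{c_\omega}\bigl(1-e^{-c_\omega u(t)/2}\bigr)\le\frac{C(\eps_1,\omega)}{c_\omega},$$
which is the right inequality of \eqref{eq:two-sided-length}.

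I expect no real obstacle beyond checking that the constant in \eqref{eq:epsilon2} is exactly what makes the prefactor in Step 1 equal $\eps_1/2$.
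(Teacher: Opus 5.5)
Your proposal is correct and follows the paper's proof essentially step by step. You use the identity $\eps=(L\int k^2\,ds)^5Q$ together with the two comparisons from \eqref{eq:curvature-bounds}, and the constant $\eps_2$ makes the prefactor $\eps_1/2$ exactly, since $\eps_2\bigl((2\pi\omega)^2+\eps_1/(2\pi)^4\bigr)^5/(2\pi\omega)^{10}=\eps_1/2$ by \eqref{eq:epsilon2}. The continuity argument, Corollary~\ref{cor:lower-length} for the lower bound, and integration of $(\log L)'\le\frac{C(\eps_1,\omega)}{2}u'e^{-c_\omega u/2}$ for the upper bound all match the paper, with your supremum $T^*$ playing the role of its ``first time at which $\eps=\eps_1$''.
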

\begin{proof}
Because $\eps(0)\le\eps_2<\eps_1$, there is an initial interval where $\eps<\eps_1$. On every such interval,
\begin{align*}
 \eps(t)&=\left(L\int k^2\dd\right)^5Q(t)\\
 &\le\left((2\pi\omega)^2+\frac{\eps_1}{(2\pi)^4}\right)^5
       \frac{\eps(0)}{(2\pi\omega)^{10}}
       \exp\left(-\frac{c_\omega}{2}\int_0^t L(r)^{-8}\,dr\right).
\end{align*}
The initial ratio bound follows from $L(0)\int k(0)^2\dd\ge(2\pi\omega)^2$. Definition \eqref{eq:epsilon2} gives \eqref{eq:barrier}. A first time at which $\eps=\eps_1$ would contradict this estimate by continuity. Hence it holds throughout the maximal smooth existence interval.

Lemma~\ref{lem:monotonicity} and Corollary~\ref{cor:lower-length} give the lower bound. For the upper bound, Lemma~\ref{lem:upper-length} gives
\begin{align*}
 \frac{d}{dt}\log L
 &\le\frac{C(\eps_1,\omega)}{\eps_1}\eps(t)L(t)^{-8}\\
 &\le\frac{C(\eps_1,\omega)}2L(t)^{-8}
       \exp\left(-\frac{c_\omega}{2}\int_0^tL(r)^{-8}\,dr\right).
\end{align*}
The integral of the last line from $0$ to $t$ is at most $C(\eps_1,\omega)/c_\omega$, because the derivative of its exponential factor is $-(c_\omega/2)L(t)^{-8}$ times that factor. This proves \eqref{eq:two-sided-length} without assuming global existence.
\end{proof}

\begin{lemma}\label{lem:higher-norms}
Suppose a smooth solution has $0<L_-\le L(t)\le L_+<\infty$ and $\int k^2\dd\le 2E_0[\gamma_0]$ on its existence interval. For each integer $\ell\ge1$, there is a constant $C$, depending only on $\ell$, $L_-$ and $E_0[\gamma_0]$, such that
\begin{equation}\label{eq:higher-energy}
 \frac{d}{dt}\int k_{s^\ell}^2\dd+\int k_{s^{\ell+4}}^2\dd\le C.
\end{equation}
Consequently, each $\int k_{s^\ell}^2\dd$ is uniformly bounded by a constant depending additionally on $L_+$ and its initial value.
\end{lemma}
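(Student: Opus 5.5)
The plan is the standard energy method for curvature flows, in the style of Dziuk--Kuwert--Sch\"atzle \cite{DKS}. First I will compute the evolution of $k_{s^\ell}$. From \eqref{eq:variation} with $\varphi=V$, induction using the commutator $[\partial_t,\partial_s]=kV\partial_s$ gives
$$\partial_t k_{s^\ell}=-k_{s^{\ell+8}}+P_3^{\ell+6}(k)+P_5^{\ell+4}(k),$$
where $P_b^a(k)$ denotes a universal linear combination of products of $b$ factors $k_{s^{i_1}},\dots,k_{s^{i_b}}$ with $\sum i_j=a$ and each $i_j\le\ell+6$.

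The $P_3$ part comes from $V_{ss}$ and from the commutator terms against the cubic part of $V$. The $P_5$ part comes from $k^2V$ and from the terms $kV\partial_s$. The tangential term $\alpha\tau$ contributes nothing to integrals, as noted after \eqref{eq:gauge}. Differentiating $\int k_{s^\ell}^2\dd$ and using $\partial_t ds=-kV\,ds$, then integrating the principal term by parts four times, I expect
$$\frac{d}{dt}\int k_{s^\ell}^2\dd+2\int k_{s^{\ell+4}}^2\dd=\int P_4^{2\ell+6}(k)\dd+\int P_6^{2\ell+4}(k)\dd.$$
Integration by parts moves derivatives so that every factor carries at most $\ell+4$ derivatives.

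Next I will bound the lower-order terms by interpolation. I will use the Gagliardo--Nirenberg type inequality for closed curves from \cite{DKS}:
$$\int|P_b^a(k)|\dd\le C L^{1-a-b}\|k\|_{2}^{b-\gamma}\,\|k\|_{m,2}^{\gamma},\qquad \gamma=\frac{a+b/2-1}{m},$$
with $m=\ell+4$. Here $\|k\|_{m,2}$ is the scale-invariant norm $\sum_{i\le m}L^{i+\ldots}\|k_{s^i}\|_2$. This is valid when $\gamma<2$.

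The exponents come out as follows. For $P_4^{2\ell+6}$, $\gamma=(2\ell+7)/(\ell+4)<2$. For $P_6^{2\ell+4}$, $\gamma=(2\ell+6)/(\ell+4)<2$. So Young's inequality absorbs each term into $\int k_{s^{\ell+4}}^2\dd$, leaving a constant that depends on $\|k\|_2^2\le 2E_0[\gamma_0]$ and on powers of $L$.

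The powers of $L$ appearing are negative, or can be arranged to be bounded using only $L\ge L_-$. Lower-order terms in the full norm $\|k\|_{m,2}$ are reduced to $\|k\|_2$ and $\|k_{s^m}\|_2$ by the same interpolation. This yields \eqref{eq:higher-energy}, with $C$ depending on $\ell$, $L_-$ and $E_0[\gamma_0]$ only.

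For the ``consequently'' part, I will use Lemma~\ref{lem:poincare} repeatedly on $k_{s^j}$, which has zero mean for $j\ge1$. This gives
$$\int k_{s^\ell}^2\dd\le (L/2\pi)^8\int k_{s^{\ell+4}}^2\dd\le (L_+/2\pi)^8\int k_{s^{\ell+4}}^2\dd.$$
Inserting this into \eqref{eq:higher-energy} gives $y'+(2\pi/L_+)^8y\le C$ for $y=\int k_{s^\ell}^2\dd$. Hence $y(t)\le\max\{y(0),C(L_+/2\pi)^8\}$, which is a bound depending on $L_+$ and on the initial value.

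The main obstacle is the bookkeeping in the first step. I must verify that after integration by parts no term contains a factor with more than $\ell+4$ derivatives, and that the scaling exponents yield $\gamma<2$ with the correct sign on the powers of $L$. The lower length bound is needed precisely because the interpolation constants carry negative powers of $L$.
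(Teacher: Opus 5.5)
Your proposal is correct and follows essentially the same route as the paper. The shared steps are the structure $\partial_t k_{s^\ell}=-k_{s^{\ell+8}}+P_3^{\ell+6}+P_5^{\ell+4}$, the interpolation exponents $(2\ell+7)/(\ell+4)$ and $(2\ell+6)/(\ell+4)$, Young absorption with remainders carrying only negative powers of $L$, and the fourfold Poincar\'e step; the paper proves its own Fourier-based interpolation inequality with explicit length dependence rather than citing Dziuk--Kuwert--Sch\"atzle. One small correction: that cited inequality needs every factor to have order at most $m-1=\ell+3$, not $\ell+4$, and this is what integration by parts actually gives, since any factor exceeding half the total order $2\ell+6$ can have a derivative moved off it (giving $\ell+2$ for the six-factor terms).
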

\begin{proof}
We first record the interpolation estimate with its length dependence. For a smooth periodic scalar function $f$, integers $m\ge1$ and $0\le i\le m-1$, and $2\le p\le\infty$, one has
\begin{align}\label{eq:interpolation}
 \|\partial_s^if\|_p\le{}&C
 \|\partial_s^mf\|_2^{(i+1/2-1/p)/m}
 \|f\|_2^{1-(i+1/2-1/p)/m}\notag\\
 &+C L^{-i-1/2+1/p}\|f\|_2.
\end{align}
The constants depend only on $m,i,p$. Here is a direct verification. Rescale to length one and expand in Fourier modes. Splitting the sum for $\partial^if$ at frequencies $|n|\le N$ and $|n|>N$ gives respectively the bounds $CN^{i+1/2}\|f\|_2$ and $CN^{i+1/2-m}\|\partial^mf\|_2$. Taking $N$ to be an integer comparable to $\max\{1,(\|\partial^mf\|_2/\|f\|_2)^{1/m}\}$ proves the $p=\infty$ estimate; $f=0$ is immediate. Fourier H\"older gives $\|\partial^if\|_2\le\|\partial^mf\|_2^{i/m}\|f\|_2^{1-i/m}$. Interpolating $L^2$ and $L^\infty$, and separating whether $\|\partial^mf\|_2\ge\|f\|_2$, proves the intermediate $p$ estimates. Rescaling gives \eqref{eq:interpolation}.

The derivative of the geometric integral $\int k_{s^\ell}^2\dd$ is unchanged by a tangential velocity: that velocity transports its integrand by $\alpha\partial_s$ and contributes $\alpha_s$ to the measure, giving a periodic total derivative. We may therefore compute with the purely normal representative. For the curvature equation, \eqref{eq:variation} and \eqref{eq:flow} give $k_t=-k_{s^8}$ plus sums of products of three curvature factors with total derivative order six, and of five factors with total derivative order four. Applying $[\partial_t,\partial_s]=kV\partial_s$ repeatedly shows that $\partial_tk_{s^\ell}$ has leading term $-k_{s^{\ell+8}}$; its remaining products have respectively three factors and total derivative order $\ell+6$, or five factors and total derivative order $\ell+4$.

Differentiate $\int k_{s^\ell}^2\dd$, including $\partial_tds=-kVds$. Four integrations by parts give the leading term
$$-2\int k_{s^\ell}k_{s^{\ell+8}}\dd=-2\int k_{s^{\ell+4}}^2\dd.$$
Every remaining term is an integral of four curvature factors with total derivative order $2\ell+6$, or six factors with total derivative order $2\ell+4$. Its coefficient depends only on $\ell$. This also includes the terms coming from the moving measure.

Put $m=\ell+4$. By integration by parts, the derivatives in the four-factor terms can be distributed so that each order is at most $m-1$; for the six-factor terms each order is at most $m-2$. To see termination, whenever one factor has more than half the total derivative order, integrate one of its derivatives onto the other factors. Its order decreases, while the sum of all other orders is smaller; repeating reduces the maximal order to at most half the total. No boundary term occurs.

Apply H\"older with exponent four or six and \eqref{eq:interpolation} to each factor. The powers of $\|k_{s^m}\|_2$ in the two cases are respectively
$$\frac{2\ell+7}{\ell+4}<2,\qquad \frac{2\ell+6}{\ell+4}<2.$$
For example, a four-factor product with orders $i_1+\cdots+i_4=2\ell+6$ is bounded by
\begin{align*}
 C\|k_{s^m}\|_2^{(2\ell+7)/(\ell+4)}
  \|k\|_2^{4-(2\ell+7)/(\ell+4)}
 +C L^{-(2\ell+7)}\|k\|_2^4.
\end{align*}
Mixed terms in the product of the interpolation bounds satisfy the same estimate: split according to whether $\|k_{s^m}\|_2$ is above or below $L^{-m}\|k\|_2$. The six-factor expression has powers $(2\ell+6)/(\ell+4)$ and $6-(2\ell+6)/(\ell+4)$, with remainder $CL^{-(2\ell+6)}\|k\|_2^6$.

Young's inequality bounds each product, for every $\delta>0$, by $\delta\int k_{s^m}^2\dd$ plus a constant multiple of
$$\left(\int k^2\dd\right)^{2\ell+9}
 +L^{-(2\ell+7)}\left(\int k^2\dd\right)^2
 +L^{-(2\ell+6)}\left(\int k^2\dd\right)^3.$$
The exponent $2\ell+9$ follows from dividing the lower-norm exponent by $2$ minus the higher-norm exponent in each case. There are finitely many terms for each $\ell$; choose their $\delta$ values so that their total contribution is at most $\int k_{s^m}^2\dd$. The assumed lower length bound and elastic energy bound now prove \eqref{eq:higher-energy}.

Because $k_{s^\ell}$ has zero mean for $\ell\ge1$, four applications of \eqref{eq:poincare} give
$$\int k_{s^{\ell+4}}^2\dd\ge\left(\frac{2\pi}{L_+}\right)^8
                                  \int k_{s^\ell}^2\dd.$$
Multiplying the resulting differential inequality by $\exp((2\pi/L_+)^8t)$ gives a uniform bound for $\int k_{s^\ell}^2\dd$, including its initial value.
\end{proof}

We now prove Theorem \ref{thm:global}.
\begin{proof}
Take the maximal smooth solution supplied by Theorem~\ref{thm:local}. Corollary~\ref{cor:barrier} gives two-sided length bounds throughout its maximal interval, and Lemma~\ref{lem:monotonicity} gives $\int k^2\dd\le2E_0[\gamma_0]$. Lemma~\ref{lem:higher-norms} therefore bounds every curvature Sobolev norm uniformly. If $T_{\max}$ were finite, the continuation criterion in Theorem~\ref{thm:local} would extend the solution. Thus $T_{\max}=\infty$.

With $L_+=L(0)\exp(C(\eps_1,\omega)/c_\omega)$, equation \eqref{eq:barrier} gives
$$\eps(t)\le\frac{\eps_1}{2}\exp\left(-\frac{c_\omega}{2L_+^8}t\right).$$
The lower length bound gives the same type of estimate for $\int k_{ss}^2\dd$. Alternatively, \eqref{eq:dissipation} and \eqref{eq:gradient} give directly
$$\frac{d}{dt}\int k_{ss}^2\dd\le-\frac{2c_\omega}{L_+^8}
                                          \int k_{ss}^2\dd.$$

For higher derivatives, \eqref{eq:curvature-bounds} gives exponential decay of $\|k-\bar k\|_2^2$, while Lemma~\ref{lem:higher-norms} gives a uniform bound for $\|\partial_s^m(k-\bar k)\|_2$ for every $m\ge1$. Apply \eqref{eq:interpolation} to $f=k-\bar k$, $p=\infty$ and $m=2j+2$. The exponent of the higher norm is $(j+1/2)/(2j+2)<1/2$, so the exponent of the decaying lower norm is greater than $1/2$. The remainder decays as well because $L$ is bounded below. This proves \eqref{eq:exponential-curvature} with one common smaller rate, while allowing $C_j$ to depend on $j$.
\end{proof}

\subsection{The limiting curve}

\begin{proposition}\label{prop:limit}
Under the hypotheses of Theorem~\ref{thm:global}, the constant-speed representative of the flow converges exponentially in $C^\infty(\R/\mathbb Z;\R^2)$ to a fixed multiply covered circle $\gamma_\infty$. Its length $L_\infty$ is finite and positive, its signed curvature is $2\pi\omega/L_\infty$, and its radius is $L_\infty/(2\pi|\omega|)$. The rescaled curves $\eta=\gamma/L(t)$ converge to a circle of radius $1/(2\pi|\omega|)$. In particular, with $d\hat s=ds/L$,
\begin{equation}\label{eq:normalised-curvature}
 \int\bigl(Lk-2\pi\omega\bigr)^2\,d\hat s
 \le\frac{\eps(t)}{(2\pi)^4},\qquad
 \|Lk-2\pi\omega\|_\infty\le\frac{\sqrt{\eps(t)}}{(2\pi)^{3/2}}.
\end{equation}
\end{proposition}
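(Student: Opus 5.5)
The plan is to integrate the velocity of the constant-speed representative \eqref{eq:gauge} in time and show it is exponentially integrable in every $C^m$ norm. The inequalities \eqref{eq:normalised-curvature} come first, because they are immediate. Since $d\hat s=ds/L$ and $L\bar k=2\pi\omega$, the first two lines of \eqref{eq:curvature-bounds} in Lemma~\ref{lem:poincare} read exactly
\[
\int(Lk-2\pi\omega)^2\,d\hat s=L\int(k-\bar k)^2\dd\le\frac{\eps}{(2\pi)^4},\qquad
\|Lk-2\pi\omega\|_\infty=L\|k-\bar k\|_\infty\le\frac{\sqrt\eps}{(2\pi)^{3/2}}.
\]

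Next I would show that $L(t)$ converges. By \eqref{eq:two-sided-length}, $L$ is bounded above and below. Lemma~\ref{lem:length} together with \eqref{eq:curvature-bounds} gives $|L'|\le C\int k_{s^3}^2\dd+C\|k\|_\infty^2\int k_{ss}^2\dd+C\|k_s\|_\infty^2\int k_s^2\dd$. Each term decays exponentially by \eqref{eq:exponential-curvature}, so $L'\in L^1(0,\infty)$ and $L(t)\to L_\infty\in(0,\infty)$ exponentially fast. The same bounds control the tangential term: $|\alpha_s|\le 2|kV|$, and $\int\alpha\dd=0$ together with \eqref{eq:poincare} gives $\|\alpha\|_\infty\le C\|kV\|_2$. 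Every term of $V$ contains a derivative $k_{s^j}$ with $j\ge2$. Hence $\|V\|_\infty$ and all its arclength derivatives are bounded by $C_je^{-ct}$, and the same holds for $\alpha$.

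Then I would pass to $x\in\R/\mathbb Z$. With $\partial_x=L\partial_s$ and $L$ bounded, we get $\|\partial_x^m\gamma_t\|_\infty\le C_me^{-ct}$. The higher $x$-derivatives of $\gamma$ itself are handled through $\partial_x^m\gamma=L^m\partial_s^{m-1}\tau$, which is a polynomial in $L$, the $k_{s^j}$, $\tau$ and $\nu$, all of which stay bounded. So $\gamma(t)=\gamma_0+\int_0^t\gamma_t$ is Cauchy in each $C^m$, with tail bounded by $C_me^{-ct}/c$. This yields a smooth limit $\gamma_\infty$ and exponential convergence in $C^\infty$. The limit is parametrised with constant speed $L_\infty>0$, hence is an immersion. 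Its curvature is the limit of $k$, which by \eqref{eq:exponential-curvature} equals $\lim\bar k=2\pi\omega/L_\infty$, with all $k_{s^j}$ vanishing. By Theorem~\ref{thm:stationary}, or simply by integrating the Frenet equations, $\gamma_\infty$ is an $|\omega|$-fold circle of radius $L_\infty/(2\pi|\omega|)$. The rescaled curves $\eta=\gamma/L$ then converge to $\gamma_\infty/L_\infty$, a circle of radius $1/(2\pi|\omega|)$.

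The main obstacle is the bookkeeping in the decay of the velocity. The $L^\infty$ bound on $\alpha$ is the delicate point: because $\alpha$ is defined nonlocally, one must check it is controlled by $\|kV\|_2$ uniformly, using the lower length bound. One must also verify that the decay rate survives conversion from $s$-derivatives to $x$-derivatives, which uses the two-sided length bounds. Exponential decay of $L'$ is what makes the limit fixed rather than drifting. Translation drift is harmless here, because the integrability of $\gamma_t$ already pins down the limit.
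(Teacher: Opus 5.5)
Your proposal is correct and follows essentially the same route as the paper. You obtain exponential decay of $V$, $\alpha$ and all their arclength derivatives, convert to $x$-derivatives via $\partial_x=L\partial_s$ with uniform bounds on $\tau,\nu$, integrate $\gamma_t$ in time, and identify the limit through its constant curvature; the differences are cosmetic (you bound $L'$ through Lemma~\ref{lem:length} rather than $|\int kV\dd|$, and $\|\alpha\|_\infty$ through \eqref{eq:poincare} rather than a zero of $\alpha$), apart from one small slip: $|\alpha_s|\le2|kV|$ holds only in sup norm, $\|\alpha_s\|_\infty\le2\|kV\|_\infty$, not pointwise, and your Poincar\'e bound on $\alpha$ uses the upper length bound as well as the lower one.
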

\begin{proof}
Every term of $V$ contains an arclength derivative of $k$. Theorem~\ref{thm:global} therefore gives exponential decay of $V$ and all its arclength derivatives. The curvature itself is uniformly bounded. Consequently $|L'|=|\int kV\dd|\le Ce^{-ct}$, so $L(t)$ converges exponentially to a finite limit. The lower bound in \eqref{eq:two-sided-length} ensures $L_\infty>0$.

Equation \eqref{eq:alpha} gives the same decay for $\alpha$ and all its arclength derivatives. For its zeroth-order bound, its zero mean supplies a point where $\alpha=0$, and integration gives $\|\alpha\|_\infty\le L\|\alpha_s\|_\infty$. Since $ds=L\,dx$ with $L$ independent of $x$, all $x$-derivatives of $V$ and $\alpha$ also decay exponentially. The Frenet equations, the length bounds and the uniform curvature derivative bounds give uniform bounds for all $x$-derivatives of $\tau$ and $\nu$. Thus \eqref{eq:gauge} implies
$$\|\gamma_t(\cdot,t)\|_{C^j(\R/\mathbb Z)}\le C_j e^{-ct}
 \quad\hbox{for every }j\ge0.$$
Integration from $t$ to infinity proves convergence of the entire trajectory in every $C^j$ norm to one fixed map $\gamma_\infty$; no subsequence, additional translation or time-dependent rotation is needed.

The equality $|\gamma_x|=L(t)$ passes to the limit and gives $|\partial_x\gamma_\infty|=L_\infty>0$. Hence the limit is an immersion. Equation \eqref{eq:exponential-curvature} gives constant signed limiting curvature $2\pi\omega/L_\infty$. Integration of the Frenet equations identifies it as the circle of radius $L_\infty/(2\pi|\omega|)$, covered $|\omega|$ times with orientation determined by the sign of $\omega$.

Division by $L(t)\to L_\infty>0$ proves convergence of $\eta$. Finally,
$$\int(Lk-2\pi\omega)^2\,d\hat s=L\int(k-\bar k)^2\dd,$$
and \eqref{eq:curvature-bounds} proves \eqref{eq:normalised-curvature}.

For the purely normal parametrisation, the transport vector field $-\alpha/L$ and all its spatial derivatives decay exponentially. Its flow of diffeomorphisms therefore converges smoothly: differentiation of its defining ODE gives uniformly bounded derivatives and integrable time derivatives at each order. Its first derivative is the exponential of a uniformly convergent time integral, and stays bounded away from zero. Its limit is a smooth diffeomorphism. Thus the normal parametrisation converges as well, with the corresponding limiting parametrisation of the same circle.
\end{proof}

\section{The thresholds and their scope}\label{sect5}

The condition in Lemma~\ref{lem:monotonicity} is a sufficient condition, not an optimal threshold. The root in \eqref{eq:threshold} can be computed without any unspecified interpolation constant. Its approximate values are
\begin{center}
\begin{tabular}{c|rrrr}
$|\omega|$ & $1$ & $2$ & $3$ & $5$\\\hline
$\eps_*(\omega)$ & $42.957256$ & $28.131792$ & $17.977278$ & $8.369341$
\end{tabular}
\end{center}
An elementary sufficient condition that avoids solving \eqref{eq:threshold} is
\begin{equation}\label{eq:algebraic-threshold}
 \eps(t)\le\frac{8(2\pi)^5}
                 {13\,[113+24(|\omega|+1)^2]}.
\end{equation}
The right-hand side is smaller than $(2\pi)^5$, so $\sqrt\eps/(2\pi)^{5/2}\le1$. Substitution in \eqref{eq:monotonicity} then makes its bracket nonnegative. For $|\omega|=1$, the right-hand side is approximately $28.833655$.

Neither \eqref{eq:threshold} nor \eqref{eq:algebraic-threshold} by itself proves that an initial smallness condition is preserved. That role belongs to \eqref{eq:epsilon1}--\eqref{eq:epsilon2} and Corollary~\ref{cor:barrier}. The global threshold $\eps_2$ remains dependent on the nonnumerical constant $c_\omega$ in \eqref{eq:gradient}; this paper does not claim a fully numerical global convergence threshold.

\vspace{0.3cm}

{\small{\bf{Acknowledgments:}}
 Our main result had been reported in Henan University in October 16th, 2025 and we would like to thank Prof. H.Q. Liu for the invitation. We also thank Mr. H.Y. Chen for the discussion about \eqref{eq:algebraic-threshold}.

\section*{Declarations}
\subsection*{Funding}

\subsection*{Conflicts of interests}
The authors have no relevant financial or non-financial interests to disclose.
\subsection*{Availability of data and material}
Not applicable.
\subsection*{Code availability}
Not applicable.

\end{document}